\newtheorem{theorem}{Theorem}
\newtheorem{theorem*}{Theorem}
\newtheorem{lemma}{Lemma}
\newtheorem{prop}{Proposition}
\newtheorem{cor}{Corollary}
\theoremstyle{definition}
\newtheorem{example}{Example}
\newtheorem{defi}{Definition}[section]
\newtheorem{remark}{Remark}
\begin{document}

\title{Partitioned Factors in Christoffel and Sturmian Words}

\author{Norman Carey \and David Clampitt }
\email{\textsf{\href{mailto:ncarey@gc.cuny.edu}{N. Carey $<$ncarey@gc.cuny.edu$>$},
      \href{clampitt.4@osu.edu}{D. Clampitt $<$clampitt.4@osu.edu$>$}}.} 
\maketitle

\begin{abstract}
Borel and Reutenauer (2006) showed, \emph{inter alia}, that a word $w$ of length $n>1$ is conjugate to a Christoffel word  if and only if for $k=0,1, \dots , n-1$, $w$ has $k+1$ distinct circular factors of length $k$. Sturmian words are the infinite counterparts to Christoffel words, characterized as aperiodic but of minimal complexity, i.e., for all $n \in \mathbb{N}$ there are $n+1$ factors of length $n$. Berth\'{e} (1996) showed that the factors of a given length in the Sturmian case have at most three frequencies (probabilities). In this paper we extend to results on factors of both Christoffel words and Sturmian words under fixed partitionings (decompositions of factors of length $m$ into concatenations of words whose lengths are given by a composition of $m$ into $k$ components). Any factor of a Sturmian word (respectively, circular factor of a Christoffel word) thus partitioned into $k$ elements belongs to one of $k+1$ equivalence classes (varieties). We show how to compute the sizes of the equivalence classes determined by a composition in the case of Christoffel word, and show how to compute the frequencies of the classes in the case of Sturmian words. A version of the finitary case was proved in a very different context (expressed in musical and number-theoretical terms) by Clough and Myerson (1985, 1986); we use their terminology, \emph{variety} 
and \emph{multiplicity}.
\end{abstract}


\section{Partitioned Factors in Christoffel Words}\label{incw}

\subsection{Introduction}
We investigate partitioned factors, defined below, in circular Christoffel words and in Sturmian words. We classify and enumerate types of partitioned factors in Christoffel words, the varieties defined below, and determine the numbers of occurrences for each variety. Similarly, in Sturmian words we classify and enumerate the varieties of partitioned factors, and determine the frequencies (probabilities) for each variety.
 
\subsection{Definitions}

Consider symbols $a$, $b$, which we call \emph{letters} of an ordered alphabet $A=\{a<b\}$. Then the set of words over $A$ is the monoid, $$A^{\ast} = \{w=w_0 \dots w_{n-1} | w_i \in A, n \in \mathbb{N}\},$$  where the operation is concatenation of words, and the empty word $\varepsilon$ is assumed to belong to $A^{\ast}$ and serves as the monoid identity element.

\begin{defi}
Let $w =w_0 \dots w_{n-1} \in A^{\ast}$.
Then we say the \emph{length} of $w$ is $n$, and write $|w|=n$.
\end{defi}

\begin{defi}
If there exist $u,v \in A^{\ast}$ such that $w = uv$, then $u$ and $v$ are \emph{factors} of $w$. Furthermore, we say that $u$ is a \emph{prefix} of $w$ and $v$ is a \emph{suffix} of $w$.
\end{defi}

\begin{defi}
We say that $w$ and $w'$ are \emph{conjugates} of each other if $w = uv$ and $w' = vu$.
\end{defi}

\begin{defi}
Conjugacy is clearly an equivalence relation.
The set of conjugates of $w$ is called its conjugacy class, or equivalently, the \emph{circular word} $w$, notated  $(w)$.
\end{defi}

\begin{defi} Let $|w|_{b}$ be the number of occurrences of $b$ in $w$; we call this the \emph{height} of $w$. More generally, if $u$ is a factor of $w$, let $|w|_{u}$ be the number of occurrences of $u$ in $w$.
\end{defi}

Let $|w|=n>1$, and fix $m$, $1\leq m < n$.      

\begin{defi} 
We define $\mathcal{F}_m$, the \emph{multiset of circular factors of length $m$ of $w$}, to be the factors of length $m$ of the prefix of $w^2$ of length $n+m-1$.
\end{defi}

Let $1 \leq k \leq m$.

\begin{defi}
Let $P = (p_{1},\dots ,p_{k})$ be an ordered $k$-tuple of positive integers $p_{i}$, such that  $m=p_{1}+ \dots +p_{k}$.
We say that $P$ is a {\em{composition of $m$ into $k$ parts}}.
\end{defi}

\begin{defi}\label{parfac}
We define {\em{the multiset of $P$-partitioned factors of length $m$}} to be $\mathcal{F}_{(m,k)} = \{u \in \mathcal{F}_{m} | u=u_{1} \dots u_{k}\}$, where $|u_i |=p_{i}$. 
We call the factors $u_i$ the {\em{components}} of partitioned factor $u$.
\end{defi}

\begin{defi}
The \emph{height profile} of a partitioned factor $u$ is the sequence of integers $ |u_{i}|_{b}$, denoted by $\langle |u_{1}|_{b},\dots,  |u_{k}|_{b} \rangle$.
\end{defi}

\begin{defi}\label{vari}
We say that partitioned factors $u$, $u' \in \mathcal{F}_{(m,k)}$ are of the same \emph{variety} if their height profiles are equal, i.e., $|u_{i} |_{b}=|u'_{i}|_{b}$ for all $i$.
\end{defi}

\begin{defi}
The number of partitioned factors in $\mathcal{F}_{(m,k)}$ of a given variety is called the \emph{multiplicity} of that variety.
\end{defi}

We will also require the following two conventions. Given $x \in \mathbb{R}$, the ``floor function,'' $\lfloor x \rfloor$, indicates the greatest integer less than or equal to $x$: $x-1 <\lfloor x \rfloor \le x$. We use the notation $\{x\}$ to indicate the ``fractional part'' of $x$: $x - \lfloor x \rfloor = \{x\}$. Under this convention, $0 \le \{x\}<1$.

\begin{example}\label{six}
Consider the multiset of circular factors of length $m = 3$ in the word $w = aaabab$:
$$ \mathcal{F}_{3} = \{aaa, aab, aba, bab, aba, baa\}.$$
Take $k = 2$.
The composition $P = (1,2)$  determines the multiset of circular partitioned factors, 
$$ \mathcal{F}_{(3,2)} =  \{(a)(aa), (a)(ab), (a)(ba), (b)(ab), (a)(ba), (b)(aa)\}.$$

These partitioned factors exhibit four different height profiles, defining four different varieties: $\lambda_0 \leftrightarrow \langle 0,0 \rangle$, $\lambda_1 \leftrightarrow \langle 0,1\rangle$, $\lambda_2 \leftrightarrow \langle 1,0\rangle$
$\lambda_3 \leftrightarrow \langle 1,1\rangle$:
$$
\begin{array}{cc}
\text{Partitioned factor} &\text{variety}\\
\hline
(a)(aa) & \lambda_{0}\\
(a)(ab) & \lambda_{1}\\
(a)(ba) & \lambda_{1}\\
(b)(ab) & \lambda_{3}\\
(a)(ba) & \lambda_{1}\\
(b)(aa) & \lambda_{2}\\
\end{array}
$$
The multiplicities of ${\lambda_{0}}$, ${\lambda_{1}}$,  ${\lambda_{2}}$, and ${\lambda_{3}}$ are  1, 3, 1, 1, respectively. 
\end{example}

\begin{example}\label{seven}
Now consider 7-letter word $c = aabaaab$, and look at the multiset of circular factors of length $m = 4$:
$$\mathcal{F}_{4} = \{aaba, abaa, baaa, aaab, aaba, abaa,  baab\}.$$ 
With  $k = 2$ and $P = (1,3)$, we get the multiset of partitioned factors,
$$\mathcal{F}_{(4,2)} = \{(a)(aba), (a)(baa), (b)(aaa), (a)(aab), (a)(aba), (a)(baa),  (b)(aab)\}.$$ 

The partitioned factors are of three different height profiles giving rise to three varieties:
$\lambda_{0} \leftrightarrow \langle 0,1 \rangle$, $\lambda_{1} \leftrightarrow \langle 1,0  \rangle$,  $\lambda_{2} \leftrightarrow \langle 1,1 \rangle$:

$$
\begin{array}{cc}
\text{Partitioned factor} &\text{variety}\\
\hline
(a)(aba) & \lambda_{0}\\
(a)(baa) & \lambda_{0}\\
(b)(aaa) & \lambda_{1}\\
(a)(aab) & \lambda_{0}\\
(a)(aba) & \lambda_{0}\\
(a)(baa) & \lambda_{0}\\
(b)(aab) & \lambda_{2}\\
\end{array}
$$
The multiplicities of ${\lambda_{0}}$, ${\lambda_{1}}$, and ${\lambda_{2}}$ are  5, 1, and 1, respectively.
\end{example}

Note that, in Example \ref{six}, a composition into two components ($k = 2$) yielded four varieties, and in Example \ref{seven}, only three.  It can be verified by inspection that  \emph{any} set of partitioned factors  in $c = aabaaab$ will admit $k+1$ varieties for all $m$ and $k$ as defined. Theorem \ref{card} will outline conditions under which this property holds.

The word $c$ in Example \ref{seven} is a conjugate of a \emph{Christoffel word}, which may be defined as follows:
Given rational $\theta = q/(p+q)$ where  $(p,q)=1$, a word $w=w_0 \dots w_{p+q-1}$ is a (lower) Christoffel word if, for $i=0, \dots, p+q-1$: 
$$
w_i =
\begin{cases}
a \text{, if } \lfloor{(i+1)\theta}\rfloor - \lfloor{i\theta}\rfloor = 0,\\
b \text{, if }  \lfloor{(i+1)\theta}\rfloor - \lfloor{i\theta}\rfloor = 1.
\end{cases}
$$
We follow \cite{Berstel} and call the value $q/p$ the \emph{slope} of the Christoffel word. (\cite{Lothaire} considers the slope to be the height of a word divided by its length, here $\theta = q/(p+q)$, a definition that is more convenient for Sturmian words.) With $p = 5$ and $q = 2$, the value $\theta = 2/7$ generates the lower Christoffel word $aaabaab$ with slope  2/5, a conjugate of $c$ from Example \ref{seven}.

\subsection{Partitioned factors and their varieties}
We seek to demonstrate that if, and only if, for all $m,1 \leq m<n$, $\mathcal{F}_{(m,1)}$ admits two varieties, $\mathcal{F}_{(m,k)}$ admits $k+1$ varieties, for $1 \leq k \leq m$.  The ``only if'' direction is trivial, since the conclusion contains the premise. We will show that the premise implies that $w$ is a conjugate of a Christoffel word. Our theorem will assert an equivalence between being a Christoffel conjugate and having the property that $\mathcal{F}_{(m,k)}$ admits $k+1$ varieties. The multiplicities of the $k+1$ varieties will be determined in Corollary~\ref{mult}.

\begin{remark}
The \cite{BorelReutenauer} result is equivalent to the case $\mathcal{F}_{(m,m)}$ for $1 \leq m < n$: the composition is $1+1\dots +1=m$; there are $m+1$ classes. (Their theorem also includes the empty word: for length zero, there is one factor. We omit the trivial case of the empty factor.)
\end{remark}
  
\begin{prop}\label{mp}
 Given $(w)$, if for all $m,1 \leq m<n$, $\mathcal{F}_{(m,1)}$ admits two varieties, then $w$ is conjugate to a Christoffel word.
\end{prop}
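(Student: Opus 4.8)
The plan is to show that the hypothesis forces $w$ to be primitive and (circularly) balanced, and then to convert this into the exact complexity profile of a Christoffel conjugate, invoking the theorem of Borel and Reutenauer \cite{BorelReutenauer}. Begin with an observation about single‑component partitions: for a fixed length $m$, as the window defining a circular factor slides by one position around $w$ exactly one symbol is deleted from the left and one appended on the right, so the height changes by at most $1$; hence the set $\{\,|u|_b : u \in \mathcal{F}_m\,\}$ is an interval of integers. Consequently ``$\mathcal{F}_{(m,1)}$ admits two varieties'' is equivalent to saying this set equals $\{h_m, h_m+1\}$ for some $h_m \ge 0$, and in particular any two members of $\mathcal{F}_m$ with $1 \le m < n$ have heights differing by at most $1$.

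First I would upgrade this balance condition to all lengths. For $m = n$ every circular factor is a conjugate of $w$, hence has height $|w|_b$; for $m > n$, writing $m = qn + r$ with $0 \le r < n$, every circular factor of length $m$ is $w^{q}$ followed by a circular factor of length $r$, so its height is $q|w|_b$ plus a length‑$r$ height. Combined with the case $1\le m<n$, any two circular factors of $w$ of equal length differ in height by at most $1$; equivalently, the periodic word $w^{\infty}$ is balanced. Next I would establish primitivity: if $w = v^{d}$ with $d \ge 2$ and $\ell = |v|$, then $1 \le \ell < n$ and every circular factor of length $\ell$ is a conjugate of $v$, hence has height $|v|_b$, so $\mathcal{F}_{(\ell,1)}$ would admit a single variety, contradicting the hypothesis at $m = \ell$. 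Thus $w$ is primitive, so the least period of $w^{\infty}$ is $n$.

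It remains to pass from ``primitive and circularly balanced'' to ``conjugate to a Christoffel word.'' Let $p_w(k)$ be the number of \emph{distinct} circular factors of $w$ of length $k$. The classical theory of balanced words supplies the upper bound $p_w(k) \le k+1$. For the matching lower bound: $p_w(0)=1$, and $p_w(1)=2$ by the hypothesis at $m=1$; moreover $p_w$ is non‑decreasing, and if $p_w(k) = p_w(k+1)$ for some $k$ with $1 \le k \le n-2$ then every length‑$k$ circular factor would have a unique right extension, forcing $w^{\infty}$ to be periodic with period at most $p_w(k) \le k+1 \le n-1$, contradicting primitivity. Hence $p_w$ is strictly increasing on $\{0,1,\dots,n-1\}$, so $p_w(k) \ge k+1$ and therefore $p_w(k) = k+1$ there. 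By \cite{BorelReutenauer}, $w$ is conjugate to a Christoffel word.

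I expect the main obstacle to be this final passage from a metric (balance) condition to the precise combinatorial structure of Christoffel conjugates. The upper bound $p_w(k)\le k+1$ is where balance does the real work and must be either cited or re‑derived (the standard argument shows that $k+2$ distinct factors of length $k$ would contain two that are too far apart in height); the lower bound is the easier half, being essentially the Morse--Hedlund dichotomy applied to the purely periodic word $w^{\infty}$. A secondary point to check carefully is the reduction modulo $n$ used to extend balance to lengths $\ge n$, and the bookkeeping when $r=0$.
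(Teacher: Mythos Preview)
Your argument is correct. The core of your proof coincides with the paper's: both establish that the hypothesis forces $(w)$ to be primitive (via the observation that a nontrivial power would make all circular factors of length $|t|$ have the same height) and circularly balanced$_1$ (you phrase it as ``heights form an interval of size two,'' the paper argues the contrapositive that a height gap $\ge 2$ would yield at least three varieties). The divergence is only in the final step. The paper invokes directly the characterization in \cite[Theorem~6.9]{Berstel} that a primitive balanced$_1$ circular word is a conjugacy class of Christoffel words. You instead take the longer route through the complexity function: balance gives $p_w(k)\le k+1$ (Coven--Hedlund), primitivity plus Morse--Hedlund gives $p_w(k)\ge k+1$, and then \cite{BorelReutenauer} finishes. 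Your detour to extend balance to lengths $\ge n$ is harmless but not actually needed, since you only use $p_w(k)$ for $k\le n-1$. What your route buys is a slightly more self-contained argument that makes the complexity dichotomy explicit and connects the proposition back to the Borel--Reutenauer characterization already highlighted in the paper's introduction; what the paper's route buys is brevity, since \cite{Berstel} packages balance $+$ primitivity $\Rightarrow$ Christoffel in one citation.
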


\begin{proof}
We show that conjugates of $w$ are primitive and balanced$_{1}$, and thus are conjugates of a Christoffel word by \citet[Theorem 6.9]{Berstel}.

Conjugates of $w$ must be primitive, because if some conjugate $w'=t^{h}$ for $n>h>1$, factors of length $|t|<n$ would all have the same height, and $\mathcal{F}_{(t,1)}$ would admit a single variety.

Factors of length 1 are just $a$ and $b$: both letters must be factors. Assume there exist factors $u$, $v$ of length $m$, $1<m<n$, such that $||u|_{b}-|v|_{b}| \geq 2$.  Since not all factors of length $m$ have the same height, there must be a conjugate of $w$ with a factor $u' = u_{1}u_{2}\dots u_{m}$, where $|u'|_{b} = |u|_{b}$, and such that there exists $v' =u_{2} \dots u_{m}u_{m+1}$ of different height from $u'$.  But $v'$ differs from $u'$ in exactly one letter, so $||u'|_{b}-|v'|_{b}|=1$. Together with $||u|_{b}-|v|_{b}| \geq 2$, $\mathcal{F}_{(m,1)}$ would admit at least 3 varieties, contrary to hypothesis.   

Therefore, the circular word $(w)$ is balanced$_{1}$ and primitive (i.e., $w$ and all of its conjugates are balanced$_{1}$ and primitive).  By \cite{Berstel}, $w$ is conjugate of a Christoffel word. 
\end{proof}

Let us assume, then, that $w$ is a Christoffel word of length $n>1$, and we will consider it together with all its conjugates (equivalently, the circular word $(w)$). Further, we may take $w$ to be a lower Christoffel word of slope ${q}/{p}$. Then $p+q=n$, $(p,q)=1$, equivalently, $(p,n) = (q,n) = 1$. Let $p^{\ast} = p^{-1} \bmod n$; $q^{\ast} = q^{-1} \bmod n$. We take $p>q$, where $|w|_{a} = p$, $|w|_{b} = q$. \\

\begin{theorem}\label{card}
If and only if $w$ is conjugate to a Christoffel word, $\mathcal{F}_{(m,k)}$ admits $k+1$ varieties, for $1 \leq k \leq m$.  
\end{theorem}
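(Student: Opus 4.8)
The plan is to prove both directions by leveraging Proposition~\ref{mp} and the structure theory of Christoffel words. The ``if'' direction — that a Christoffel conjugate has exactly $k+1$ varieties for every composition $P_k$ of every $m < n$ — is the substantive part; the ``only if'' direction follows by specializing to $k=1$: if $\mathcal{F}_{(m,k)}$ admits $k+1$ varieties for all admissible $m,k$, then in particular $\mathcal{F}_{(m,1)}$ admits $2$ varieties for all $m$, and Proposition~\ref{mp} gives that $w$ is a Christoffel conjugate.

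For the ``if'' direction I would exploit the classical fact that the circular factors of length $m$ of a Christoffel word of length $n$ are in bijection, via the natural rotation coding, with the $n$ residues modulo $n$: writing the Christoffel word of slope $q/p$ as $c_j = \lfloor (j+1)q/n \rfloor - \lfloor jq/n \rfloor$, the circular factor starting at position $i$ records the value $\lfloor (i+m)q/n\rfloor - \lfloor iq/n\rfloor$, which takes only the two values $\lceil mq/n\rceil$ and $\lfloor mq/n\rfloor$. More importantly, for a \emph{partitioned} factor with boundaries at $i, i+p_1, i+p_1+p_2, \dots$, the height profile is the vector $\langle h_1(i),\dots,h_k(i)\rangle$ where $h_j(i) = \lfloor (i+P_1+\dots+P_j)q/n\rfloor - \lfloor (i+P_1+\dots+P_{j-1})q/n\rfloor$, and each $h_j$ takes exactly the two consecutive values $\lfloor p_j q/n\rfloor$ and $\lceil p_j q/n\rceil$ as $i$ ranges over $\mathbb{Z}/n\mathbb{Z}$ (two distinct values since $(q,n)=1$ and $0 < p_j < n$). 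The key constraint is that $\sum_j h_j(i) \in \{\lfloor mq/n\rfloor, \lceil mq/n\rceil\}$ is itself a two-valued function. So the height profile is a $k$-tuple of $\pm$-type choices (high/low in each component) subject to the single global constraint that the total is one of two consecutive integers — this immediately caps the number of varieties at $k+1$ (the total ``number of highs'' among the $k$ components can only be $s$ or $s+1$ for a fixed $s$). The remaining task is to show all $k+1$ of these profiles are actually realized, i.e., that the map $i \mapsto \langle h_1(i),\dots,h_k(i)\rangle$ hits every admissible tuple; this follows by a three-distance / continued-fraction argument tracking the positions of the fractional parts $\{iq/n\}$ relative to the thresholds $\{-P_1 q/n\},\dots,\{-(P_1+\dots+P_{k-1})q/n\}$ on the circle, using that these $k$ thresholds together with $0$ cut the circle into arcs whose pattern of membership realizes each of the $k+1$ monotone threshold-count values.

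The main obstacle I anticipate is precisely the surjectivity/realizability step: showing that \emph{every} one of the $k+1$ candidate height profiles occurs as $i$ varies. The upper bound ($\le k+1$) is essentially a counting triviality once the two-valuedness of each $h_j$ and of the sum is established, but ensuring that no profile is ``skipped'' requires understanding how the $k-1$ intermediate cut points interact — in general they need not be in ``generic position,'' and one must argue via the balancedness of Christoffel words (or directly via the fact that as $i$ increments by $1$, exactly one coordinate $h_j$ can change, and only by $\pm 1$, so the trajectory of profiles is a connected walk on the relevant layers of the Boolean lattice, forcing it to visit every layer). I would handle this by an induction on $k$, or alternatively by a direct argument: partition $\mathbb{Z}/n\mathbb{Z}$ according to which arc each offset point lies in and count. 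I expect the clean route is to observe that $\mathcal{F}_{(m,k)}$ refines $\mathcal{F}_{(m,1)}$ and $\mathcal{F}_{(m,m)}$, then bootstrap from the $k=1$ case (two varieties, known) and the $k=m$ case (Borel--Reutenauer, $m+1$ varieties) by a merging argument on adjacent components.
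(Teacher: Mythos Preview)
Your upper-bound step has a real gap. From ``each $h_j$ is two-valued and the total $\sum_j h_j$ is two-valued'' you conclude that there are at most $k+1$ height profiles, reasoning that the number of ``high'' components is pinned to two consecutive values $s,s+1$. But that constraint only bounds the number of profiles by $\binom{k}{s}+\binom{k}{s+1}$, which already for $k=3$, $s=1$ gives $6$, not $4$. Nothing in the sum constraint alone rules out, say, both $(\text{high},\text{low},\text{low})$ and $(\text{low},\text{high},\text{low})$ occurring for different $i$. So the ``immediately caps at $k+1$'' claim is false as written.

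The repair is already latent in your threshold remark, which you frame as the surjectivity step but which in fact gives both bounds simultaneously. With partial sums $s_0=0<s_1<\cdots<s_k=m$, the $k+1$ points $\{-s_0 q/n\},\ldots,\{-s_k q/n\}$ (you omitted the last one, $\{-mq/n\}$) cut the circle $\mathbb{R}/\mathbb{Z}$ into exactly $k+1$ arcs; the height profile at position $i$ depends only on which arc contains $\{iq/n\}$, and crossing the cut $\{-s_\ell q/n\}$ changes the profile (it flips $h_\ell$ and $h_{\ell+1}$ in opposite directions, or just $h_k$ when $\ell=k$). Since $(q,n)=1$, every residue $r/n$ is hit by some $\{iq/n\}$, so every arc is inhabited: exactly $k+1$ varieties. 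This is a valid and clean argument, essentially the discrete analogue of the paper's own Sturmian treatment in Section~2.

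The paper's proof of Theorem~\ref{card} takes a different route: it works in the Burrows--Wheeler matrix of $w$ and invokes the lemma (Mantaci--Restivo--Sciortino, Borel--Reutenauer) that lexicographically adjacent conjugates differ by a single $ab\leftrightarrow ba$ swap at a known column. The variety changes between rows precisely when that swap straddles one of the $k$ internal partition boundaries or the boundary at column $m$, so exactly $k+1$ varieties arise, appearing in contiguous lexicographic blocks. That packaging makes the multiplicities in Corollary~\ref{mult} fall out as differences of the row indices where the swap hits each boundary; in your rotation picture the same multiplicities appear as $n$ times the arc lengths.
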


\begin{proof}
$\Leftarrow$ The converse is trivial by Prop. \ref{mp}. 

$\Rightarrow$ We make use of the Burrows-Wheeler Transform and the matrix BWT$_{w}$. 
By construction, the rows of BWT$_{w}$ are the conjugates $w_{(i)}$ of $w$ in lexicographic order: 
$w_{(0)}< w_{(1)} < \dots < w_{(n-1)}$. See \cite{MRS}.
(We write $w_{(i)}$ to denote the $i$-th conjugate of $w$ in lexicographic order, and $w_{i}$ to denote the $i$-th letter of $w_{(0)}$.)
Let $C^{i}$ be the conjugation operator that rotates a given word by $i$ letters. For example, $C^{i}(w_{(0)}) = w_{i} w_{i+1}\dots w_{i-1}$.

We appeal to Lemma~\ref{isc}, due to \cite{MRS} and  \cite{BorelReutenauer}.
The lemma forms the heart of our proof.
\begin{lemma}\label{isc}
For each $i = 1, \dots, n-1$  and for words $u,v$, one has $w_{(i-1)} = uabv, w_{(i)} = ubav$. Also $w_{(i)} = C^{q^{\ast}}(w_{(i-1)}) = C^{iq^{\ast}}(w_{(0)})$, and $|ua| = |ub| = ip^{\ast} \bmod n$.
\end{lemma}

 It follows from the lemma that the $j$th letter in the $i$th row is $w_{iq^{\ast}+j}$. (Going forward, all calculations are reduced to least non-negative values modulo $n$.) 

We wish to determine the indices of the letters that participate in the reversal $ab$ to $ba$ uniquely distinguishing rows $i-1$ and $i$. In the following lemma, bear in mind that the letters of conjugates are indexed with respect to the lexicographically least element, $w_{(0)}$.

\begin{lemma} For all $i$ from 1 to $n-1$: in $w_{(i-1)}$ the factor $ab$, that uniquely is exchanged with the factor $ba$ in $w_{(i)}$, is indexed by $p^{\ast}-1$ and $p^{\ast}$. The corresponding factor $ba$ in $w_{(i)}$ is indexed by $n-1$ and $0$.
\end{lemma}

\begin{proof} Lemma 1 implies that the column position for the first letter in the $ab \leftrightarrow ba$ reversal in rows $i-1$ and $i$ is $j=|ua|-1 = ip^{\ast}-1$. Therefore we have the following arrangement:
$$
\begin{array}{rrr}
w_{(i-1)}: & \dots  w_{(i-1)q^{\ast}+(ip^{\ast}-1)} = a & w_{(i-1)q^{\ast}+ip^{\ast}} = b  \dots\\
w_{(i)}: & \dots  w_{(i)q^{\ast}+(ip^{\ast}-1)} = b & w_{(i)q^{\ast}+ip^{\ast}} = a  \dots\\
\end{array}
$$
The indices simplify as follows:
\begin{enumerate}[label=(\roman*)]
\item$(i-1)q^{\ast}+(ip^{\ast}-1) \equiv p^{\ast}-1.$\label{1}
\item$(i-1)q^{\ast}+ip^{\ast} \equiv p^{\ast}.$\label{2}
\item$iq^{\ast}+(ip^{\ast}-1) \equiv n-1$.\label{3}
\item$iq^{\ast}+ip^{\ast} \equiv 0.$\label{4}
\end{enumerate}

We begin by demonstrating \ref{2}.\\

\begin{tabular}{lrcll}
\ref{2}: & $(i-1)q^{\ast}+ip^{\ast}$ & $\equiv$ & $i(q^{\ast}+p^{\ast})-q^{\ast}$ & \\ 
 & &$\equiv$ & $-q^{\ast}$ & $p + q = n$ \text{, thus }$q^{\ast}+p^{\ast} = n$ \\
& &$\equiv$ &$(-q)^{\ast}$ & \\
& &$\equiv$ &$p^{\ast}$ & because $p^{\ast} \equiv -q^{\ast} \bmod n$.\\
& Therefore &&&\\
\ref{1}: & $(i-1)q^{\ast}+(ip^{\ast}-1)$ & $\equiv$ & $p^{\ast}-1$ & \\ 
\end{tabular}\\

The corresponding indices of letters in row $i$ that form the factor $ba$ are, respectively, $n-1$ and $0$:

\begin{tabular}{lrcl}
\ref{3}: & $iq^{\ast}+(ip^{\ast}-1)$ & $\equiv$ & $ (iq^{\ast}+ip^{\ast})-1$  \\ 
&& $\equiv$ & $n-1$.\\
 And so, && &\\
\ref{4}: & $iq^{\ast}+ip^{\ast}$ &$\equiv$ & 0\\
\end{tabular}\\

We therefore have $w_0 = w_{p^{\ast}-1} = a$, and $w_{p^{\ast}} = w_{n-1} = b$. We find these reversed pairs of letters in the same two columns of adjacent rows: 

$$
\begin{matrix}			
w_{(i-1)}: & \dots w_{p^{\ast}-1} = a & w_{p^{\ast}} = b \dots\\
w_{(i)}: & \dots w_{n-1} = b & w_{0} = a \dots\\
\end{matrix}
$$ 
\end{proof}

These four letters also occupy the corners of the matrix.
Clearly, $w_{0}$ and $w_{n-1}$ are the first and last letters of $w_{(0)}$.
To determine the first and last letters of $w_{n-1}$ we apply our knowledge from Lemma \ref{isc} of $w_{iq^{\ast}+j}$: for $i=n-1$, and $j=0$, $w_{(n-1)q^{\ast}+0} = w_{p^{\ast}} = b$, and for $j=n-1$, $w_{(n-1)q^{\ast}+n-1} = w_{p^{\ast}-1} = a$. \citet[47--49]{Berstel} prove that $w_{(0)}$ and $w_{(n-1)}$ form a corresponding pair of lower and  upper Christoffel words, $aub$ and $bua$ respectively. Therefore, as in the case of adjacent rows, the first and last rows also fix $n-2$ letters and reverse the other two.
$$
\begin{matrix}
w_{(0)}: &  w_{0} = a &u &  w_{n-1} = b\\
\vdots & \vdots  & \cdots & \vdots \\
w_{(n-1)}: & w_{p^{\ast}} = b &u & w_{p^{\ast}-1} = a\\
\end{matrix}
$$

From Lemma~\ref{isc} two adjacent conjugates $w_{(i)}$ and $w_{(i+1)}$ in lexicographic order are of the form $uabv$ and $ubav$, respectively. That is, $n-2$ letters remain fixed and two letters exchange positions. We employ the letter reversals to locate the lexicographically least element of each variety. Consider the prefixes of length $m$ of the conjugates that form BWT$_{w}$, partitioned according to $P$;  these partitioned prefixes comprise the multiset $\mathcal{F}_{(m,k)}$. 

Let $u_{(i)}$ and $u_{(i+1)}$ be members of $\mathcal{F}_{(m,k)}$ associated with the conjugates $w_{(i)}$ and $w_{(i+1)}$ in adjacent rows of the matrix.
Let suf$_{(i)}$ be the suffix of $w_{(i)}$ such that  $ w_{(i)} = u_{(i)}\text{suf}_{(i)}$ (analogously, suf$_{(i+1)}$). Because $m < n$, suf$_{(i)} \ne \epsilon$. If $|\text{suf}_{(i)}|>1$, then the reversal of letters may appear entirely within suf$_{(i)}$ and suf$_{(i+1)}$. When this happens, then $u_{(i)} = u_{(i+1)}$, and so $u_{(i)}$ and $u_{(i+1)}$ belong to the same variety.

Consider the components of $u_{(i)}$. If the factor $ab=w_{p^{\ast}-1}w_{p^{\ast}}$ occurs entirely within one of the components, that component must be at least of length 2, and the reversal occurs within the corresponding component of $u_{(i+1)}$. Then the components have the same height. Because the heights of all the other components remain the same, the partitioned factors $u_{(i)}$ and $u_{(i+1)}$ are of the same variety. 

	On the other hand, if the reversal of letters crosses a boundary between components, then $u_{(i)}$ and $u_{(i+1)}$ belong to different varieties. Given the $c$-th component of $u_{(i)}$, $1 \leq c < k$, with last letter $w_{p^{\ast}-1}=a$, and the $c+1$-th component of $u_{(i)}$ with first letter $w_{p^{\ast}}=b$, then in $u_{(i+1)}$ the last letter of the $c$-th component is $w_{n-1}= b$, and the first letter of the $c+1$-th component is $w_{0} = a$, and the heights of the corresponding components differ. By definition, $u_{(i)}$ and $u_{(i+1)}$ are of different varieties.

	Finally, the reversal may take place between the last letters of the $k$-th components and the first letters of the respective suffixes: the last letter of the $k$-th component of $u_{(i)}$ is $a$, the first letter of suf$_{(i)} = b$, the last letter of the $k$-th component of $u_{(i+1)}$ is $b$, the first letter of suf$_{(i+1)} = a$. In this case, the heights of the $k$-th components differ by 1, and so $u_{(i)}$ and $u_{(i+1)}$ belong to different varieties. 

In all cases where $u_{(i)}$ and $u_{(i+1)}$ are of different varieties, $u_{(i+1)}$ is lexicographically least of its variety. (See Figure~\ref{generic}.)

 We form partial sums $s_\ell$ from the composition of $m$, always beginning with 0: $s_0 = 0, s_1=p_1, s_2 = p_1+p_2, \dots,  s_k=p_1+ \dots + p_k$. The columns of the matrix indexed by the partial sums mark the boundaries between components in the partitioned factors. By the above argument, the lexicographically least element of any variety appears exactly when letter $w_0$ appears in the $s_\ell$-th column. Because there are $k+1$ values for $s_\ell$, there are exactly $k+1$ varieties.
\end{proof}

\begin{figure}
{
\centering
\begin{tabular}
{c | ccc | ccc  |       |l}
 1 & &  2 & & &  3 &\\
\hline
\dots & \dots & \dots & \dots & \dots & \dots& \dots & \\
\dots & \dots & $w_{p^{\ast}-1} = a$ & $w_{p^{\ast}}= b$ &\dots & \dots & \dots  & variety $\alpha$ \\
\dots & \dots & $w_{n-1}=b$ & $w_{0}=a$ & \dots& \dots & \dots  & variety $\alpha$ \\
\dots & \dots & \dots & & \dots & \dots  & \dots  & \\
\dots & \dots & \dots & $w_{p^{\ast}-1}=a$ & $b_{p^{\ast}}=b$ & \dots  & \dots& variety $\alpha$\\
\hline
\dots & \dots & \dots & $w_{n-1}=b$ & $w_{0}=a$ & \dots  &  \dots& variety $\beta$\\
\dots & \dots & \dots & & \dots & \dots  & \dots &  \\
\dots & \dots & \dots & \dots & \dots & \dots & $w_{p^{\ast}-1} = a$  & variety $\beta$\\
\hline
\dots & \dots & \dots & \dots & \dots & \dots & $w_{n-1}=b$  & variety $\gamma$\\
\end{tabular}
\caption{Letter reversals and varieties in $\mathcal{F}_{(m,3)}$. When reversal $ab\leftrightarrow ba$ occurs within components, the adjacent partitioned factors belong to the same variety. When it crosses a partition boundary, the adjacent partitioned factors belong to different varieties. Let the height profile of variety $\alpha$ be $\langle r, s, t \rangle$; then for variety $\beta$ it is $\langle r, s+1, t-1 \rangle$ and for variety $\gamma$, $\langle r, s+1, t \rangle$.
\label{generic}
}}
\end{figure}

The theorem is illustrated in Figure~\ref{fig: Partitioned_Christoffel}, where $w = aaabaab$ is the lower Christoffel word of slope ${2}/{5}$ and the set $\mathcal{F}_{(4, 3)}$ of partitioned factors of length four is determined by a composition of four into three components, i.e., $P = (1,2,1)$. This in turn gives rise to a composition $\Pi$ of $7$ that yields the multiplicities, $(1,4,1,1)$. Determining multiplicities forms the focus of Corollary~\ref{mult}.

\subsection{Multiplicities of varieties}
Theorem \ref{card} implies that the varieties appear partitioned horizontally in lexicographic order in the BWT matrix. Following this order, we define the varieties as $\lambda_{\ell}$ with $0 \le \ell \le k$. Variety $\lambda_{0}$ always appears in row 0. In general, the lexicographically least element of a variety appears in a row in which letter $w_{0}$ is in some column $s_{\ell}$. We calculate these row numbers by solving for $i$ in the congruence $iq^{\ast}+j \equiv 0$ when $j = s_{\ell}$. By Lemma~\ref{isc}, $ip^{\ast}  \equiv s_{\ell}$ and so,
\begin{equation}\label{ival}
i \equiv s_{\ell}p. 
\end{equation}

There are $k+1$ values of $s_{\ell}$ and, because $(n,p) = 1$, Congruence \ref{ival} has $k+1$ solutions. We order these solutions:
Again, let $i_{\ell} = s_{\ell}p$. Let $\sigma$ be a permutation of the $k+1$ values $i_{\ell}$ such that $0 = \sigma(i_{0}) < \sigma(i_{1}) < \dots < \sigma(i_{k})$. Then, for $0 \leq \ell \leq k$, let $\pi_{\ell} = \sigma(i_{\ell+1})-\sigma(i_{\ell})$ defining $\pi_{k}$ as $n - {\sigma(i_k)}$.
Finally, let $\Pi = (\pi_{0},\dots,\pi_{k})$. By construction, $\Pi$ is a composition of $n$ into $k+1$ components.

\begin{cor}\label{mult}
Given $w$ conjugate to a Christoffel word, the multiplicities of the $k+1$ varieties $(\lambda_{0},\dots, \lambda_{k})$ of $\mathcal{F}_{(m,k)}$ are given by the elements of the composition $\Pi= (\pi_{0},\dots,\pi_{k})$.
\end{cor}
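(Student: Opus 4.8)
The plan is to leverage Theorem~\ref{card} and its proof directly: we already know from that argument that, as we descend the BWT matrix row by row, the variety changes precisely at those rows where the letter $w_0$ lands in a column $s_\ell$ that is a partial sum of the partition $P_k$, and that at such a row the newly-encountered partitioned factor is the lexicographically least of its variety. So the multiplicity of a variety is simply the number of consecutive rows over which that variety persists, i.e.\ the gap between the row index where it first appears and the row index where the next variety first appears.

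First I would make precise the bookkeeping: by Congruence~\eqref{ival} the row in which $w_0$ occupies column $s_\ell$ is $i_\ell \equiv s_\ell p \pmod n$, and since $(n,p)=1$ and the $s_\ell$ are distinct these $k+1$ residues $i_0=0, i_1,\dots,i_k$ are pairwise distinct modulo $n$. Sorting them via the permutation $\sigma$ gives $0=\sigma(i_0)<\sigma(i_1)<\dots<\sigma(i_k)<n$, and these are exactly the row indices (in top-to-bottom matrix order) at which a new variety begins. Then I would argue that the rows strictly between $\sigma(i_\ell)$ and $\sigma(i_{\ell+1})$ — together with row $\sigma(i_\ell)$ itself — all carry the same variety: by the proof of Theorem~\ref{card}, passing from one row to the next changes the variety only when $w_0$ crosses a component boundary, which by construction happens only at rows $\sigma(i_1),\dots,\sigma(i_k)$; at every other row transition the $ab\leftrightarrow ba$ reversal stays inside a single component (or inside a suffix), leaving all component heights fixed. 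Hence the block of rows $[\sigma(i_\ell),\sigma(i_{\ell+1}))$ is monochromatic in variety, and likewise the final block $[\sigma(i_k),n)$ wraps back to row $0$'s variety only in the circular sense but as a contiguous block of the matrix it is a single new variety $\lambda_k$.

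Counting the rows in each block then gives the multiplicity: the block starting at $\sigma(i_\ell)$ has length $\sigma(i_{\ell+1})-\sigma(i_\ell)=\pi_\ell$ for $0\le \ell<k$, and the last block has length $n-\sigma(i_k)=\pi_k$. Since every row of BWT$_w$ contributes exactly one partitioned factor of $\mathcal{F}_{(m,k)}$ and the $n$ rows are thereby partitioned into these $k+1$ monochromatic blocks, the multiplicity of $\lambda_\ell$ is $\pi_\ell$. The sum $\pi_0+\dots+\pi_k=n$ is automatic and confirms $\Pi$ is an ordered partition of $n$ into $k+1$ parts, matching the total $|\mathcal{F}_{(m,k)}|=n$.

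The main obstacle I anticipate is the matching of labels: Theorem~\ref{card} establishes that varieties appear as contiguous horizontal bands and that $w_0$-in-column-$s_\ell$ detects a band boundary, but I must be careful that the ordering of the bands down the matrix corresponds to the sorted order $\sigma(i_0)<\dots<\sigma(i_k)$ of the boundary rows, and that no two distinct $s_\ell$ produce the same band (they cannot, since distinct columns force genuinely distinct height profiles, as spelled out in the ``crosses a boundary'' case of the proof of Theorem~\ref{card} and in Figure~\ref{generic}). Once that correspondence is nailed down, the rest is the elementary observation that a partition of $\{0,1,\dots,n-1\}$ into consecutive intervals with left endpoints $\sigma(i_0),\dots,\sigma(i_k)$ has interval lengths given exactly by the successive differences $\pi_\ell$, with wraparound absorbed into $\pi_k=n-\sigma(i_k)$.
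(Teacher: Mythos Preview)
Your proposal is correct and follows essentially the same approach as the paper's own proof: identify via Congruence~\eqref{ival} the rows $\sigma(i_\ell)$ where each variety first appears, and read off the multiplicities as the successive differences $\pi_\ell=\sigma(i_{\ell+1})-\sigma(i_\ell)$ (with $\pi_k=n-\sigma(i_k)$). You are simply more explicit than the paper about why the intermediate rows form monochromatic blocks and why the $k+1$ row indices are distinct, but the argument is the same.
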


\begin{proof}
By  Congruence~\ref{ival}, the row numbers of the first appearances of the varieties are the solutions to $i = s_{\ell}p$, which are ordered, $0 =\sigma(i_0) < \sigma(i_1) < \dots < \sigma(i_k)$. The first variety, $\lambda_{0}$, appears in row $\sigma(i_0) = 0$ and has multiplicity $\sigma(i_1) - \sigma(i_0)$. In general, then,  as $\ell$ ranges from 0 to $k$, $\lambda_{\ell}$ first appears in row $ \sigma(i_\ell)$ and has multiplicity $\pi_{\ell} = \sigma(i_{\ell + 1}) - \sigma(i_\ell).$
The sequence of differences $\pi_{\ell}$ gives the composition $\Pi$.
\end{proof}

\begin{figure} 
\center  
\includegraphics [width=13 cm] {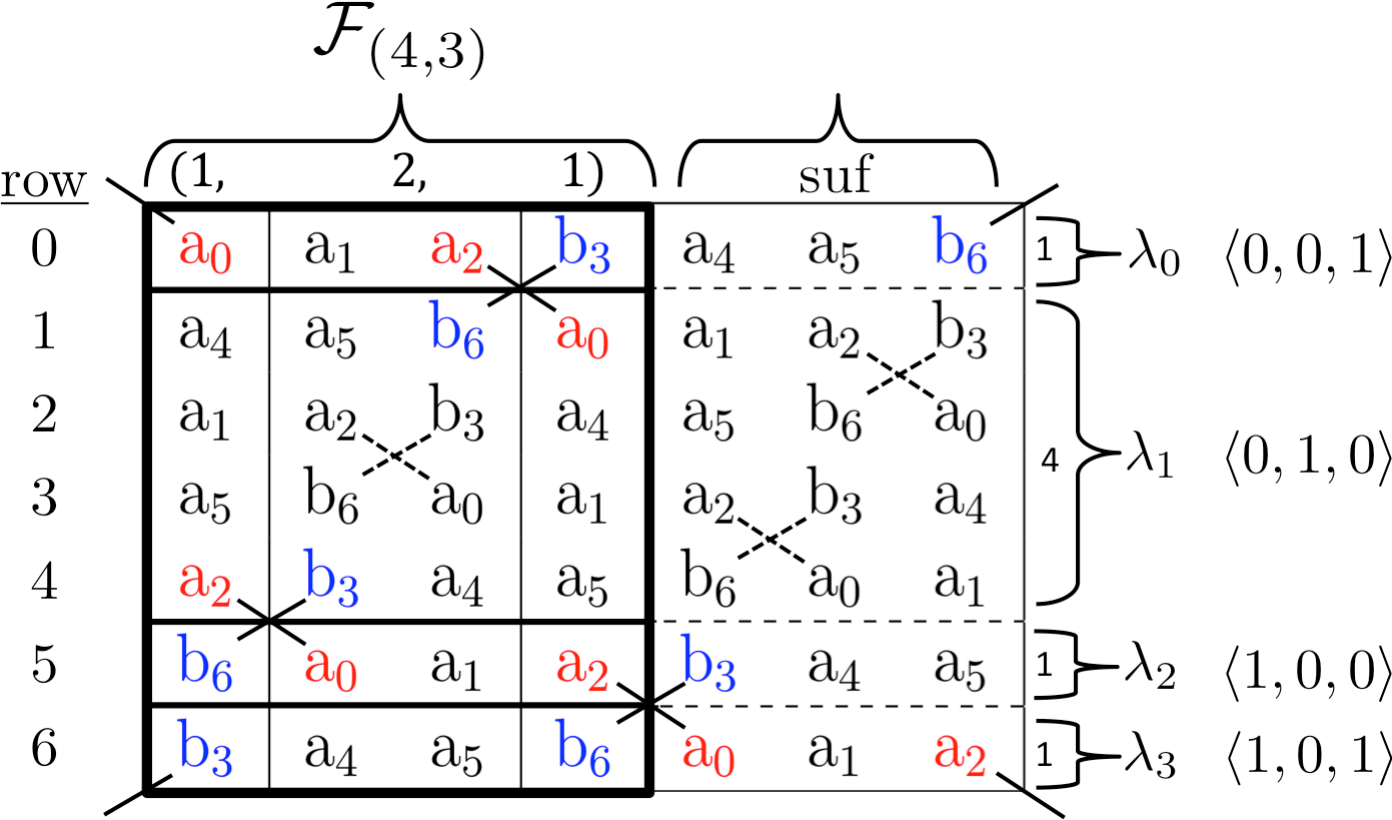}   
\caption{In $(w) = (aaabaab)$, $\mathcal{F}_{(4,3)} $  determined by composition (1,2,1). The four varieties have height profiles $\langle 0,0,1 \rangle$,  $\langle 0,1,0 \rangle$,  $\langle 1,0,0 \rangle$, $\langle 1,0,1 \rangle$. The multiplicities of these varieties are produced by composition $\Pi = (1,4,1,1)$.} 
\label{fig: Partitioned_Christoffel}
\end{figure}

Turning again to Figure~\ref{fig: Partitioned_Christoffel}, partial sums $s_{\ell} = \{0,1,3,4\}$ of the composition $P = (1,2,1)$ determine the four varieties, $\lambda_{0}, \lambda_{1}, \lambda_{2}, \lambda_{3}$. The lexicographically least element of each variety appears when $w_{0}$ is in the column immediately to the right of a partition boundary, namely those columns associated with partial sums $s_{\ell}$. The first variety is (always) produced by the appearance of $w_{0}=a$ in row zero, column zero. In this example, variety $\lambda_{0}$ is characterized by the height profile, $\langle 0,0,1 \rangle$. In row $1$, $w_{0}$ appears in column 3, immediately to the right of a partition boundary, giving rise to variety $\lambda_{1}$, with a height profile of $\langle 0,1,0 \rangle$. The next appearance of $w_{0}$ to the right of a partition boundary is column 1 in row 5, generating the variety $\lambda_{2}$, $\langle 1, 0, 0 \rangle$. Finally, it appears in column 4 of row 6, giving variety $\lambda_{3}$, $\langle 1,0,1 \rangle$. We may calculate the row numbers directly by taking multiples $ s_{\ell}p\pmod 7$: $0\times 5 = 0$; $1\times 5 = 5$; $3\times 5 = 1$; $4\times 5 = 6$. The permutation $\sigma$ orders these solutions, $(0,1,5,6)$: Taking circular differences modulo 7  forms the composition $\Pi = (1,4,1,1)$, which yields the multiplicities of the four varieties. 

\subsection{The case of $\mathcal{F}_{(m,m)}$}
  An application to the case of circular factors of length $m$ of a Christoffel word of slope $q/p$    specializes to the composition of $m$ into $m$  parts, $m=1+1+\dots+1$. There are $m+1$ distinct factors, as is known, and the result shows that therefore the multiplicities of the occurrences are determined by taking least nonnegative residues mod $n$ of $0  p$, $1  p$, $2  p, \dots, m  p$, ordering them as ordinary integers, adjoining $n$ as greatest element, and taking differences between adjacent elements. Figure~\ref{fig:1111} illustrates with $\mathcal{F}_{(4,4)}$ in $aabaabab$, the Christoffel word of slope 3/5. The five varieties determined by the composition $P = (1,1,1,1)$ have multiplicities $\Pi = (2,2,1,2,1)$.

For proofs of Theorem \ref{card} and Corollary \ref{mult} in a different mathematical environment and an application to music, see  \cite{CloughMyerson85, CloughMyerson86}.
 
\begin{figure} 
\center  
\includegraphics [width=13 cm] {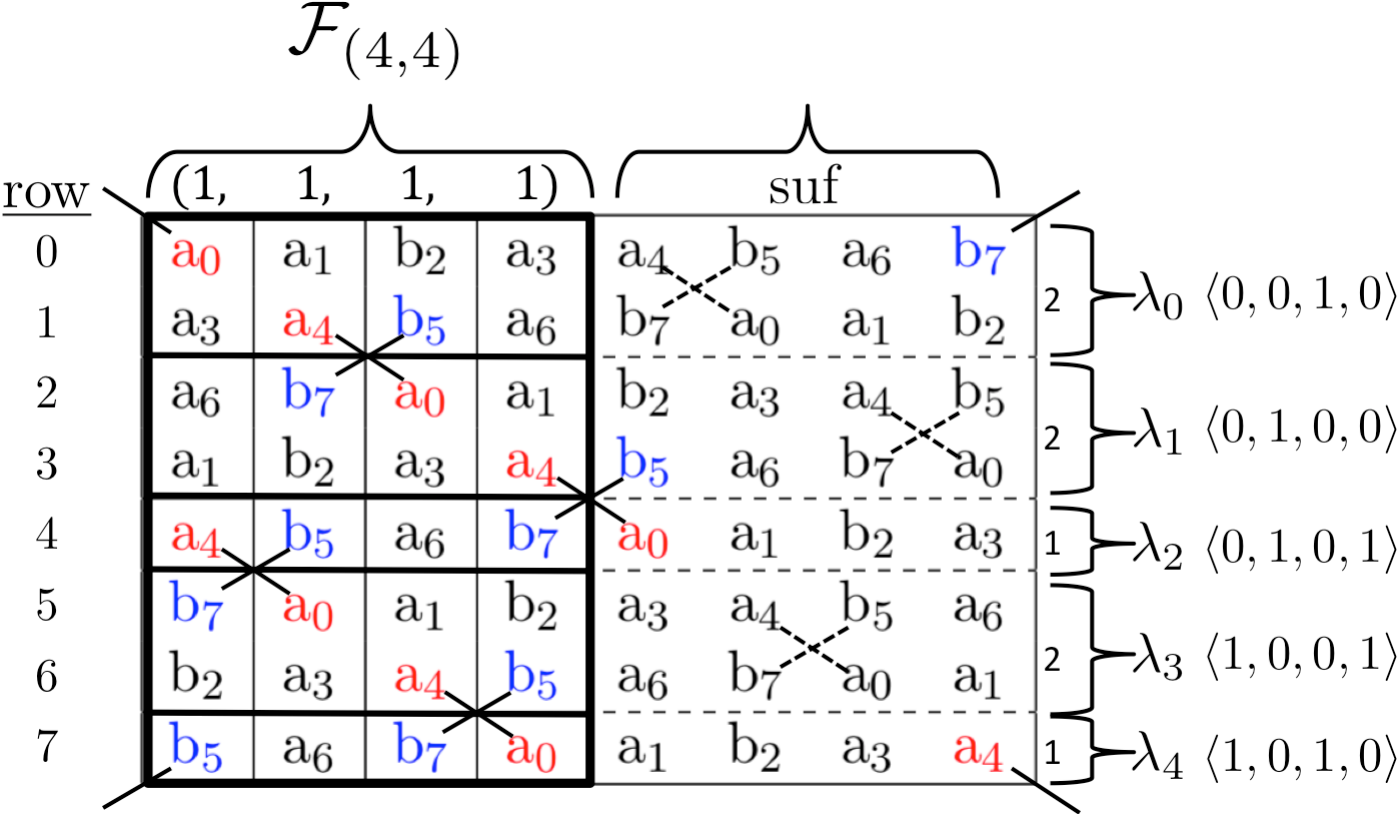}   
\caption{In  $(w) = (aabaabab)$, $\mathcal{F}_{(4,4)}$  determined by composition (1,1,1,1). The five varieties have height profiles $\langle 0,0,1,0 \rangle$,  $\langle 0,1,0,0 \rangle$,  $\langle 0,1,0,1\rangle$, $\langle 1,0,0,1 \rangle$, $\langle 1,0,1,0 \rangle$. The multiplicities of these varieties are produced by composition $\Pi = (2,2,1,2,1)$.} 
\label{fig:1111}
\end{figure}

\section{Partitioned Factors in Sturmian Words}\label{pfsturm}

We now turn to the question of the varieties and frequencies (probabilities) of partitioned factors of a Sturmian word $s$ over $\{a<b\}$.  A Sturmian word is an infinite word that has $n+1$ factors of length $n$ for every $n \geq 0$. An explicit arithmetic construction, analogous to the one given for Christoffel words, will be presented in Equation (\ref{sturdef}).

Let the infinite word $s=s_{0}s_{1} \dots$, where $s_{i} \in \{a<b\}, i \in \mathbb{N}$, be Sturmian.

\begin{defi}
A word $u \in \{a<b\}^{\ast}$ of length $m$ is a \emph{factor} of $s$ if, for some $j \in \mathbb{N}$, $u=s_{j}\dots s_{j+m-1}$.
\end{defi}

\begin{defi}
The set of factors of $s$ of length $m$ is defined as $\mathcal{L}_{m}$. A Sturmian word is characterized by the fact that the cardinality of $\mathcal{L}_{m}$ is $m+1$ for all $m \in \mathbb{N}$. 
\end{defi}

\begin{defi}
Following Definition \ref{parfac}, the set of  $P$-\emph{partitioned factors of length} $m$, $\mathcal{L}_{(m,k)}$, consists of the factorizations $u=u_{1} \dots u_{k}$, where $|u_{i}| = p_{i}$, $1 \leq i \leq k$. 
\end{defi}

\begin{defi}
Following Definition \ref{vari}, partitioned factors $u$, $u'$ in $\mathcal{L}_{(m,k)}$ are of the same variety if their height profiles are equal. Being of the same variety is clearly an equivalence relation on $\mathcal{L}_{(m,k)}$; we identify the varieties with the equivalence classes.  
\end{defi}

\begin{defi}
Let $v$ be a prefix of $s$ such that $|v| = n$. Following \cite{Berthe}, the \emph{frequency} of $w$ in $s$ is 
$$Fr(w)=\lim_{{n} \to \infty} \frac{|v|_w}{n}.$$
\end{defi}

With $n = |v| \geq m$, let $\mu_{n}$ be the number of occurrences of $P$-partitioned factors of length $m$ in $v$ that are of variety $\lambda$.

\begin{defi}
We define  the \emph{frequency of $\lambda$ in $s$},
$$Fr(\lambda) = \lim_{n\to\infty} \frac{\mu_{n}}{n}.$$
That is, $Fr(\lambda)$, is this limit, if it exists.
\end{defi} 

\begin{example}\label{l23}
Let $s_{log_{2}(3/2)}$ be the right infinite mechanical word of slope $\log_{2}(3/2)$,
$$s_{log_{2}(3/2)} = {abababbababbabababbababbabababbababb\dots,}$$
and consider $\mathcal{L}_{4} = \{abab, abba, baba, babb, bbab\}$ under the composition (1,3). This is $\mathcal{L}_{(4,2)} = \{(a)(bab), (a)(bba), (b)(aba), (b)(abb), (b)(bab)\}$. The elements in this set exhibit three height profiles, $\langle 0,2  \rangle$, $\langle 1,1 \rangle$, and $\langle 1,2 \rangle$. These determine three equivalence classes, $\lambda_{0}$, $\lambda_{1}$,  and $\lambda_{2}$, which partition $\mathcal{L}_{(4,2)}$:
$$\lambda_{0}=\{(a)(bab), (a)(bba)\}, \lambda_{1}= \{(b)(aba)\}, \lambda_{2}= \{(b)(abb), (b)(bab)\}.$$
\end{example}

\begin{remark}\label{evfac} Every factor of $s$ is a proper factor of some prefix of $s$ which is a Christoffel word. Therefore, Theorem \ref{card} applies, and we know already that the number of varieties in $\mathcal{L}_{(m,k)}$ is $k+1$. But we rehearse the argument in Sturmian words in order to determine the frequencies of the varieties.
\end{remark}  

Since the underlying factors are fundamental, we begin with a demonstration of the minimal complexity of $s_{\theta}$, that there are $m+1$ distinct factors of length $m$, for all $m \in \mathbb{N}$.

\subsection{Varieties and Frequencies of Factors}\label{vff}
   
We may identify Sturmian words with aperiodic (lower) mechanical words of irrational slope $\theta$ and non-negative offset or intercept $\rho$, following \cite{Lothaire}: the sequence over ordered alphabet $\{a<b\}$ is 

\begin{equation}\label{sturdef}
s_{\theta,\rho}(n) =
\begin{cases}
a, & \text{if }\lfloor(n+1)\theta+\rho \rfloor - \lfloor n\theta+\rho \rfloor= 0\\
b, &  \text{if }\lfloor(n+1)\theta+\rho \rfloor - \lfloor n\theta+\rho \rfloor= 1\\
\end{cases}
\end{equation}
for irrational $\theta$, $0<\theta<1$, $n \in \mathbb{Z}$. Since the offset value $\rho$ has no effect on the set of factors \cite[53]{Lothaire}, we may take $\rho = 0$. This means that, for a slope $\theta$ and offset 0, the right-infinite mechanical word is $ac_{\theta}$ and the reversal of the left-infinite mechanical word is $bc_{\theta}$, where $c_{\theta}$ is the \emph{characteristic word} of slope $\theta$ \cite[55]{Lothaire}. That is, the bi-infinite Sturmian word of slope $\theta$ and offset 0 is symmetrical about the word $ba$. 

Because $\lfloor(n+1)\theta \rfloor -  \lfloor n\theta \rfloor = 1 \iff \{n\theta\} \geq 1-\theta$, we may equivalently encode a Sturmian word $s_{\theta}$ according to which segment of the unit circle the fractional part of a multiple of $\theta$ lies in. We write the $n$th letter as

\[s_{\theta}(n) = s_{n} =
\begin{cases}
a, & \text{if }\{n\theta \} \in I_{a} = [0, 1-\theta)\\
b, &  \text{if }\{n\theta \} \in I_{b} =  [1-\theta, 1).\\
\end{cases}
\]
   
The following rehearses the treatment of encoding factors by rotations found in \citet[50]{Lothaire}. Let $x$ be a point on the half-open unit interval $[0, 1)$, identified with the unit circle.  

\begin{defi}\label{rota} Rotation by angle $\theta$ maps $[0,1)$ to itself by $R_{\theta}(x) = R(x) =: \{x+\theta \}$. 
\end{defi}

Composing $R$ with itself $n$ times we have $R^{n}(x) = \{x+n\theta \}$. We may, letting $x$ range over some subinterval of $[0, 1)$, map a subinterval to a subinterval, with the understanding that a subinterval may straddle 0, i.e., we understand the half-open subinterval $[s, t)$ where $0 \leq t < s < 1$ to be the union $[s, 1) \cup [0, t)$. 

Consider a word of length $m$ over $\{a,b\}$, $w = r_{0}r_{1} \ldots r_{m-1}$. When is this word a factor of $s_{\theta} = s_{0}s{_1} \ldots$? (It suffices to look at the right-infinite side, by symmetry.) Then $w$ is a factor if and only if for some $n\geq 0$, $r_{0}=s_{n}$, $r_{1}=s_{n+1}, \ldots, r_{m-1}=s_{n+m-1}$. 
 By the encoding defined above, $w$ is a factor if and only if for all $i$ from 0 to $m-1$, $R^{n+i}(0) \in I_{a}$ when $r_{i}=a, R^{n+i}(0) \in I_{b}$ when $r_{i}=b$. We define $I_{r_{i}} =: I_{a}$ or $I_{b}$ according to whether $r_{i} = a$ or $b$, respectively. Then we may write, equivalently, taking $R^{-i}$ on both sides, $w$ is a factor if and only if $R^{n}(0) \in R^{-i}(I_{r_{i}})$, for all $i$ from 0 to $m-1$. That is, $w=s_{n}s_{n+1} \ldots s_{n+m-1} \iff R^{n}(0) \in I_{r_{0}}\cap R^{-1}(I_{r_{1}})\cap \ldots \cap R^{-m+1} (I_{r_{m-1}})$.
 
\begin{theorem}\label{fact} \emph{(Lothaire)} The factors ${w}$ of length ${m}$ of $s_{\theta}$ are identified with the ${m}+1$ subintervals defined by the points $0, \{-1\theta \}, \{-2\theta \}, \ldots, \{-m\theta \}$, in some order given by a permutation of the ${m}$ indices.
\end{theorem}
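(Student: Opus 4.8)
The plan is to exploit the interval-exchange description of the encoding developed just above the statement. We have already observed that a word $w = r_0 r_1 \ldots r_{m-1}$ of length $m$ is a factor of $s_\theta$ exactly when the set
\[
J_w = I_{r_0} \cap R^{-1}(I_{r_1}) \cap \cdots \cap R^{-(m-1)}(I_{r_{m-1}})
\]
is nonempty; and since factors correspond to which letters $s_n = s_{\theta}(n)$ can occur, a factor $w$ occurs at position $n$ iff $R^n(0) \in J_w$. So the first step is to understand the collection $\{J_w : w \in \{a,b\}^m\}$ as a partition (into half-open arcs, some possibly empty) of the circle $[0,1)$, and to identify the partition points.

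The key computation is this: each set $R^{-i}(I_{r_i})$ is one of the two complementary arcs cut out by the pair of points $R^{-i}(0) = \{-i\theta\}$ and $R^{-i}(1-\theta) = \{-i\theta + 1 - \theta\} = \{-(i+1)\theta\}$ — namely $R^{-i}(I_a) = [\{-i\theta\}, \{-(i+1)\theta\})$ and $R^{-i}(I_b)$ its complement. Thus over all $i = 0, 1, \ldots, m-1$, the only points that ever serve as endpoints of the arcs being intersected are the $m+1$ points $0 = \{0\cdot\theta\}, \{-1\theta\}, \{-2\theta\}, \ldots, \{-m\theta\}$ (the endpoint $\{-(i+1)\theta\}$ from level $i$ coincides with the endpoint $\{-(i+1)\theta\}$ from level $i+1$, so no new points are introduced). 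Consequently every nonempty $J_w$ is a union of arcs among those determined by this fixed set of $m+1$ points; and intersecting finitely many arcs whose endpoints all lie in this set yields an arc with endpoints in this set. The second step is then to check that these $m+1$ points are pairwise distinct: this is where irrationality of $\theta$ is used — $\{-i\theta\} = \{-j\theta\}$ for $0 \le i < j \le m$ would force $(j-i)\theta \in \mathbb{Z}$, impossible. So the $m+1$ distinct points cut the circle into exactly $m+1$ arcs.

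The third step is to show that each of these $m+1$ elementary arcs is exactly one $J_w$ (equivalently: on each elementary arc the itinerary $(r_0, \ldots, r_{m-1})$, read off by asking for each $i$ whether the arc lies in $R^{-i}(I_a)$ or $R^{-i}(I_b)$, is constant, so the arc is contained in a single $J_w$; and conversely each nonempty $J_w$, being a union of such arcs with the same itinerary, is forced to be a single elementary arc because two distinct elementary arcs with the same itinerary would make $J_w$ disconnected, yet as an intersection of arcs it is an arc — and a short argument using the rotation, or simply the known fact that Sturmian complexity is $m+1$, rules this out). This sets up a bijection between the $m+1$ elementary arcs and the factors of length $m$, and reading the endpoints off gives precisely the $m+1$ points $0, \{-1\theta\}, \ldots, \{-m\theta\}$ in circular order; relabeling them by the permutation that sorts them gives the statement.

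The main obstacle is the third step — specifically, ruling out that some $J_w$ consists of two or more separated elementary arcs, i.e., that two elementary arcs share the same length-$m$ itinerary. One clean route: each elementary arc $A$ is determined by the cyclic ordering of $0, \{-\theta\}, \ldots, \{-m\theta\}$, and its itinerary records, for $i = 0, \ldots, m-1$, on which side of the pair $(\{-i\theta\}, \{-(i+1)\theta\})$ it falls; two arcs with identical itineraries would give, after applying the appropriate powers of $R$, two arcs in the length-$(m{-}1)$ picture with identical itineraries, and one can induct, the base case $m = 1$ being immediate (two arcs, two letters). Alternatively, and more economically, I can simply invoke that $s_\theta$ has exactly $m+1$ factors of length $m$ (a standard fact, also recoverable from Remark~\ref{evfac} via Theorem~\ref{card}), so the $m+1$ nonempty $J_w$ must be exactly the $m+1$ elementary arcs, with no room for any $J_w$ to be disconnected; then only steps one and two carry the real content, and the theorem follows.
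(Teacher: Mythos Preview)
Your approach is essentially the paper's: identify each $J_w$ as an intersection of rotated arcs, observe that the only endpoints appearing are $0,\{-\theta\},\ldots,\{-m\theta\}$, and conclude that the factors are in bijection with the $m+1$ elementary arcs these points cut out. The paper's own proof is actually briefer than yours on the bijection step --- it simply asserts that ``the intersections are then the smallest subintervals defined by the $m+1$ points'' and moves on.

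One point deserves comment. Your parenthetical ``as an intersection of arcs it is an arc'' is false on the circle in general (two arcs can meet in two components), and you rightly flag that something more is needed. Of your two patches, the induction is the clean self-contained one and works exactly as you suggest: if two distinct elementary arcs in the length-$m$ picture had the same itinerary, they would share the first $m-1$ letters, hence by induction lie in a single elementary arc of the length-$(m{-}1)$ picture; but that arc is split only by $\{-m\theta\}$, which is an endpoint solely at level $m-1$, so the two pieces differ in $r_{m-1}$ --- contradiction. Your second patch (invoke the complexity $m+1$ via Remark~\ref{evfac} and Theorem~\ref{card}) is also legitimate here, since Theorem~\ref{card} is proved independently for Christoffel words, though it makes Remark~\ref{charstur} (which derives complexity $m+1$ from the present theorem) somewhat redundant. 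Either way the argument closes; just drop the ``intersection of arcs is an arc'' phrasing, since on the circle that is not what carries the weight.
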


\begin{proof} By the characterization above,  $w = r_{0}r_{1} \ldots r_{m-1}$ is a factor of $s_{\theta}$ where $r_{0}=s_{n}$ exactly when $\bigcap_{i=0}^{m-1}R^{-i}(I_{r_{i}})$ is non-empty, and when the initial point $R^{n}(0)$ lies in the subinterval that is this intersection. How many intersections are there for factors of length $m$, and what points define them? The endpoints defined by the 0th rotation of one of the subintervals $I_{a}$, $I_{b}$ are 0 and $1-\theta$, one or the other point excluded. Each of the negative rotations of one of the subintervals define two endpoints, $\{-i\theta\}, \{-(i+1)\theta\}$, since the point $1-\theta$ is already $\{-1\theta\}$ (so $R^{-i}(1-\theta) = \{-(i+1) \theta \}$). One of those endpoints is included, the other excluded; that is to say, $R^{-i}(I_{a})$ and $R^{-i}(I_{b})$ partition the unit circle into two disjoint intervals. The intersections are then the smallest subintervals defined by the $m+1$ points 0, $1-\theta = \{-1\theta \}, \{-2\theta \}, \ldots, \{-m\theta \}$, in some order given by a permutation of the indices 1 to $m$. That is, the intersections are a partition of the unit circle into $m+1$ disjoint subintervals demarcated by these $m+1$ points. Each of the appearances in $s_{\theta}$ of a given factor $w$ of length $m$ must correspond one-to-one to a point $\{n\theta\}$ in exactly one of these subintervals, and each of the countably infinite points $\{n\theta\}$ corresponds to the initial letter of one of the factors of length $m$, depending on which subinterval it falls into.
\end{proof}

\begin{remark}\label{charstur} Note that one of the characterization properties of a Sturmian word, that factors of length \emph{m} come in \emph{m}$+1$ varieties, falls out of this result. 
\end{remark}

\begin{cor}\label{freqfact} The frequencies of each factor are the respective lengths of the subintervals, which take on at most three distinct values.
\end{cor}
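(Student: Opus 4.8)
The plan is to combine Theorem~\ref{fact}, which identifies the factors of length $m$ with the $m+1$ subintervals of the unit circle cut out by the points $0, \{-1\theta\}, \{-2\theta\}, \dots, \{-m\theta\}$, with the classical three-distance (Steinhaus) theorem. First I would recall from the proof of Theorem~\ref{fact} that the frequency of a factor $w$ equals the limiting proportion of the points $\{n\theta\}$, $0 \le n < N$, that fall into the associated subinterval. Since $\theta$ is irrational, the sequence $(\{n\theta\})$ is equidistributed on $[0,1)$ (Weyl), so this limit exists and equals the length of the subinterval. Hence $Fr(w)$ is exactly the length of the subinterval of the partition assigned to $w$, and the claim reduces to a statement about the lengths occurring in that partition.

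Next I would observe that the $m+1$ points $0, \{-\theta\}, \{-2\theta\}, \dots, \{-m\theta\}$ are precisely the first $m+1$ terms of the orbit of $0$ under the rotation $R_{-\theta}$ (equivalently, $\{-i\theta\} = \{i(1-\theta)\}$, the orbit under $R_{1-\theta}$). The three-distance theorem states that for any irrational $\alpha$ and any $N$, the points $0, \{\alpha\}, \{2\alpha\}, \dots, \{N\alpha\}$ partition the circle into arcs whose lengths take at most three distinct values, and moreover the largest is the sum of the other two when there are exactly three. Applying this with $\alpha = -\theta$ (or $1-\theta$) and $N = m$ gives immediately that the $m+1$ subinterval lengths — hence the factor frequencies — take at most three distinct values.

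For the write-up I would state the three-distance theorem with a citation (e.g. to S\'os or Steinhaus, or to Lothaire where the Sturmian connection is made), since reproving it would be a digression; the substantive content of the corollary is the identification of frequencies with interval lengths, which is already essentially contained in the proof of Theorem~\ref{fact}, plus the equidistribution argument guaranteeing the limit exists. I would also remark that exactly three values occur in the generic case (and fewer only for special $m$ relative to the continued fraction expansion of $\theta$), matching Berth\'e's statement cited in the abstract.

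The main obstacle is essentially expository rather than mathematical: making precise the passage from "a factor occurs whenever $\{n\theta\}$ lands in its subinterval" to "the frequency is the length of that subinterval" requires invoking equidistribution of $(\{n\theta\})_{n\ge 0}$ and checking that the boundary points of the subintervals (a measure-zero set, and in fact the points $\{-i\theta\}$ which are hit by the orbit only for one value of $n$ each) do not affect the limit. Once that is in hand, the three-distance theorem does all the remaining work, so no genuinely hard step remains.
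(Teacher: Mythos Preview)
Your proposal is correct and follows essentially the same route as the paper: invoke Theorem~\ref{fact} to identify factors with subintervals, apply Weyl's equidistribution of $(\{n\theta\})_{n\ge 0}$ to equate frequencies with interval lengths, and then cite the three-distance theorem (the paper cites Berth\'e \cite{Berthe} here rather than S\'os or Steinhaus, but to the same effect) for the ``at most three values'' conclusion. Your additional remarks about boundary points and the generic-versus-special cases are sound refinements but not needed for the argument as the paper presents it.
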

\begin{proof}
This follows from the uniform distribution property. Let $v$ be a prefix of $s$ with $|v|=N \geq m$. By Theorem \ref{fact} we have the number of occurrences $|v|_{w}$ is exactly the number of integers $n$, $0 \leq n < N$, such that $R^{n}(0) \in \bigcap_{i=0}^{m-1}R^{-i}(I_{r_{i}})$. From \cite{weyl}, the values $R^{n}(0) = \{n\theta\}$ for all $n \in \mathbb{N}$ are uniformly distributed on the interval $[0, 1)$. Then $Fr(w)=\lim_{{N} \to \infty} \frac{|v|_w}{N}$ exists, and is the length of the subinterval of intersection. By the three-distance theorem, as \cite{Berthe} shows, these frequencies can take on at most three distinct values.
\end{proof}
 
The extension to frequencies of partitioned factors is immediate: the varieties of partitioned factors are the equivalence classes defined above, which are given by equivalence classes that partition the set of factors of length $m$. The frequencies of the factors are given by the lengths of the intersections, so the frequency of a partitioned factor variety is given by taking the underlying factors in that equivalence class and summing the lengths of their respective intersections.

\begin{cor}\label{parfacs} Given variety $\lambda \in \mathcal{L}_{(m,k)}$, let $L \subset \mathcal{L}_{m}$ such that $u \in L \iff$ the $P$-partitioned factor $u$ is in the equivalence class of $\lambda$. Then $Fr(\lambda) = \sum_{u \in L} Fr(u)$.      
\end{cor}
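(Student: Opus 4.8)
The plan is to reduce the statement to the additivity of frequency over a finite disjoint union of factors, which is already in hand from Corollary~\ref{freqfact}. First I would observe that, because the ordered partition $P_k$ is fixed, every factor $u \in \mathcal{L}_m$ carries exactly one $P_k$-partition $u = u_1 \cdots u_k$ with $|u_i| = p_i$; hence the assignment $u \mapsto (u_1,\dots,u_k)$ is a bijection between $\mathcal{L}_m$ and $\mathcal{L}_{(m,k)}$, and the partition of $\mathcal{L}_{(m,k)}$ into varieties pulls back along this bijection to a partition of $\mathcal{L}_m$ into the sets $L$ indexed by the $k+1$ varieties. In particular the sets $L$ are pairwise disjoint and finite, with $|L| \le |\mathcal{L}_m| = m+1$.

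Next I would identify the counting function $\mu_n$ with a sum of factor-occurrence counts. Fix a prefix $v$ of $s$ with $|v| = n \ge m$. An occurrence of a $P_k$-partitioned factor of variety $\lambda$ at position $j$ in $v$ is, by definition, an occurrence at position $j$ of the underlying unpartitioned factor $s_j \cdots s_{j+m-1}$, and this factor ranges precisely over $L$; conversely every occurrence in $v$ of some $u \in L$ is such an occurrence. Since the sets $L$ are disjoint, distinct $u$ contribute distinct occurrences, so
\[
\mu_n = \sum_{u \in L} |v|_u .
\]
Dividing by $n$ and letting $n \to \infty$, the right-hand side is a sum of finitely many terms, each converging to $Fr(u)$ by Corollary~\ref{freqfact}; therefore the limit defining $Fr(\lambda)$ exists and equals $\sum_{u \in L} Fr(u)$.

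For the geometric picture one may add that, by Theorem~\ref{fact}, each $u \in L$ corresponds to one of the $m+1$ pairwise disjoint subintervals of the unit circle cut out by $0, \{-1\theta\}, \dots, \{-m\theta\}$, and $Fr(u)$ is the length of that subinterval; hence $Fr(\lambda)$ is the total length of the union of the subintervals attached to $\lambda$. There is essentially no obstacle: the only points requiring care are that the bijection between factors and their $P_k$-partitions makes the sets $L$ genuinely disjoint, so that occurrences are not double-counted, and that $L$ is finite, so that the limit distributes over the sum — both of which are immediate.
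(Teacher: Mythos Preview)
Your proposal is correct and follows essentially the same route as the paper: both recognize that a variety is a finite disjoint union of underlying factors and apply additivity of frequency, invoking Theorem~\ref{fact} and Corollary~\ref{freqfact}. If anything, your version is a bit more explicit than the paper's, since you actually write $\mu_n = \sum_{u\in L}|v|_u$ and justify the existence of the limit defining $Fr(\lambda)$ by passing a finite sum through the limit, whereas the paper simply asserts that the frequency of $\lambda$ is the sum of the subinterval lengths.
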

\begin{proof}
Variety $\lambda$ is an equivalence class of $P$-partitioned factors in the set $\mathcal{L}_{(m,k)}$ with identical height profiles. This class comprises the set of underlying factors, $L$, which by Theorem \ref{fact} are associated with subintervals of $[0, 1)$ determined by the points $0, \{-\theta\}, \{-2\theta\}, \dots, \{-m\theta\}$. By Corollary \ref{freqfact}, the frequencies $Fr(u)$ of the underlying factors $u$ are the lengths of the associated subintervals. It follows that the frequency of variety $\lambda$ is the sum of the lengths of the disjoint subintervals, i.e., $Fr(\lambda) = \sum_{u \in L} Fr(u)$.  
\end{proof}

How does the lexicographic order of the factors in $\mathcal{L}_{m}$ and therefore of partitioned factors in $\mathcal{L}_{(m,k)}$ relate to the order of the intervals as encoded by rotations?    

\subsection{Lexicographic Ordering of Factors} 

As noted in Remark~\ref{evfac} above, any factor of $s_{\theta}$ is a factor of a prefix of $s_{\theta}$ that is a Christoffel word (in fact, a factor of an infinite number of Christoffel prefixes). As such, the number of varieties of factors of a given length $m$ is $m+1$, and the number of varieties of partitioned factors in $\mathcal{L}_{(m,k)}$ is $k+1$. We also know from Theorem~\ref{card} that when the factors are ordered lexicographically, the varieties of partitioned factors inherit the lexicographic order of the underlying factors, gathering together lexicographically adjacent members of $\mathcal{L}_{m}$ in the equivalence classes which are the varieties. To show that indeed all of these determinations are independent of the given Christoffel prefix,  and to answer the question of the relationship between the lexicographic order of the factors and the order of the subintervals of $[0, 1)$ corresponding to the factors, we rehearse the arguments in terms of the encoding via rotations.

It is evident that the mappings $R^{-i}$ send the complementary intervals $I_{a} = [0, 1-\theta)$ and $I_{b} = [1-\theta, 1)$ to images that are complementary intervals. A notation that will be useful makes this explicit: for all $i, 0 < i \leq n$,

$$
\begin{array}{ll}
R^{-i+1}(I_{a}) &= R^{-i+1}([0, 1-\theta))\\
& = R^{-i+1}([0, \{-\theta \}))\\
& = R^{-i+1}([0, R^{-1}(0)) \\
& = [R^{-i+1}(0), R^{-i}(0)),\\
\end{array}
$$
 and similarly,

 $$
\begin{array}{ll}
R^{-i+1}(I_{b}) &= R^{-i+1}([1-\theta, 1))\\
&= R^{-i+1}([\{-\theta \}, 1))\\
&= R^{-i+1}([R^{-1}(0), 1)) \\
&= [R^{-i}(0), R^{-i+1}(0)).\\
\end{array}
$$

We could also rewrite these complementary intervals then as $$R^{-i+1}(I_{a}) = [\{(-i+1)\theta \}, \{-i\theta \})$$ and $$R^{-i+1}(I_{b}) = [\{-i\theta \}, \{(-i+1)\theta \}).$$       

In the lexicographic ordering for words $u,v$ of length $m$ over $\{a<b\}$, $$u=u_{0}u_{1}\ldots u_{m-1} < v_{0}v_{1}\ldots v_{m-1} = v$$ if and only if for some index $i$, $u_{i}=a, v_{i}=b$, and for all $\ell<i, u_{\ell}=v_{\ell}$. 

\begin{theorem}\label{lexorder} Index in increasing order of magnitude the $m+1$ points \emph{0}, $\{-\theta \}, \{-2\theta\}, \{-3\theta \}, \ldots, \{-m\theta \}$ as $0 = q_{0} < q_{1} < q_{2}< \ldots < q_{m}$.  The $m+1$ factors of length $m$ of $s_{\theta}$ in lexicographic order correspond to the order of the $m+1$ half-open subintervals, $[{0}, q_{1}), [q_{1}, q_{2}), \dots, [q_{m-1}, q_{m}), [q_{m}, 1)$. 
\end{theorem}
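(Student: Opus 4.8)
The plan is to establish a bijection between lexicographic order on factors and left-to-right order on the subintervals by showing that the ordering of any two factors is decided at their first differing letter, and that this letter's position corresponds exactly to the separating point among the $q_j$. First I would recall from the discussion preceding Theorem~\ref{fact} that a factor $w = r_0 r_1 \cdots r_{m-1}$ corresponds to the subinterval $J_w = \bigcap_{i=0}^{m-1} R^{-i}(I_{r_i})$, and that these $m+1$ intersections partition $[0,1)$ into the subintervals cut out by the points $0, \{-\theta\}, \{-2\theta\}, \ldots, \{-m\theta\}$. The key observation is that $R^{-i}(I_a) = [\{-i\theta\}, \{-(i+1)\theta\})$ and $R^{-i}(I_b) = [\{-(i+1)\theta\}, \{-i\theta\})$ (as half-open arcs of the circle), so each negative rotation $R^{-i}$ contributes exactly the single new boundary point $\{-(i+1)\theta\}$, with $I_a$ lying ``to the left'' of that point and $I_b$ ``to the right,'' relative to the endpoints already present.

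Next I would argue by induction on $i$: after intersecting the first $i$ conditions, the admissible set for a prefix $r_0 \cdots r_{i-1}$ is one of the subintervals cut out by the points $0, \{-\theta\}, \ldots, \{-i\theta\}$, and imposing the next condition $r_i = a$ versus $r_i = b$ splits (or fails to split) that subinterval along the point $\{-(i+1)\theta\}$, with the $a$-choice giving the left piece and the $b$-choice the right piece — once one takes into account the cyclic structure and the fact that $w$ is actually a factor (so the relevant intersection is nonempty at each stage, by Theorem~\ref{fact}). Thus if $w$ and $w'$ agree on $r_0 \cdots r_{i-1}$ and differ at position $i$ with $r_i = a$, $r'_i = b$, then $J_w$ lies entirely to the left of $J_{w'}$ within their common parent subinterval, hence to the left on the whole circle. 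Since lexicographic order with $a < b$ is likewise decided at the first differing position in favor of the word with $a$ there, the two orders coincide; as both are linear orders on the same $m+1$-element set, this forces $J_{q_{\ell-1}, q_\ell}$ (with the convention $q_0 = 0$, $q_{m+1}=1$) to be the $\ell$-th factor in lexicographic order, which is the claim.

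The main obstacle I anticipate is handling the circular (wrap-around) nature of the intervals cleanly: ``left of'' and ``right of'' are only well-defined relative to a subinterval that does not itself wrap past $0$, and one must check that the parent subinterval of a genuine length-$i$ prefix never straddles $0$ in a way that reverses the $a$-left/$b$-right dichotomy. The cleanest way around this is to note that we only ever deal with subintervals of $[0,1)$ that are genuine arcs bounded by two of the points $0 = q_0 < q_1 < \cdots < q_m$, so each is an honest interval $[q_r, q_s)$ with $0 \le r < s \le m$, and the splitting point $\{-(i+1)\theta\}$ either lies strictly inside it (splitting it into $[q_r, \{-(i+1)\theta\})$ for $a$ and $[\{-(i+1)\theta\}, q_s)$ for $b$) or outside it (in which case the prefix extends uniquely, matching the fact that a prefix of length $i$ with only one extension corresponds to a subinterval not subdivided at stage $i+1$). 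Once this bookkeeping is in place the induction closes without further computation.
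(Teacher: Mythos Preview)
Your proposal is correct and takes a genuinely different route from the paper's own proof. The paper argues by comparing factors associated to \emph{adjacent} subintervals: writing $q_g = \{-h\theta\}$, it shows directly that the factors $w_{g-1}$ and $w_g$ corresponding to $[q_{g-1},q_g)$ and $[q_g,q_{g+1})$ agree except for a single transposition $ab \to ba$ at a position determined by $h$ (or differ only in the last letter when $h=m$), so that $w_{g-1} < w_g$; chaining these pairwise comparisons over $g=1,\dots,m$ yields the result. Your argument instead runs an induction on the letter index $i$, matching the successive interval refinements by the points $\{-(i+1)\theta\}$ to the prefix tree of factors, with the $a$-extension always taking the left piece and the $b$-extension the right. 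Your approach is more structural and makes the order-isomorphism conceptually transparent; the paper's approach has the side benefit of exhibiting the explicit $ab\leftrightarrow ba$ swap between lexicographically adjacent factors (paralleling Lemma~\ref{isc} in the Christoffel setting), though this is not strictly needed downstream. One point to tighten: the assertion that the $a$-choice always yields the left piece of $[q_r,q_s)$ hinges on the fact that the \emph{other} endpoint $\{-i\theta\}$ of the arc $R^{-i}(I_a)=[\{-i\theta\},\{-(i+1)\theta\})$ is already a boundary point from stage~$i$ and therefore lies outside the open interval $(q_r,q_s)$; you allude to this (``relative to the endpoints already present'') but should state it explicitly, after which a brief case split on whether $R^{-i}(I_b)$ wraps past $0$ completes the check.
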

\begin{proof}
 For $1 \leq g \leq m$, let $w_{g} = r'_0 \dots r'_{m-1}$ be a factor associated with the $g+1$-th subinterval $[q_g, q_{g+1})$, and similarly, $w_{g-1}=r''_0 \dots r''_{m-1}$ be a factor associated with the $g$-th subinterval $[q_{g-1}, q_{g})$, understanding $q_{m+1} = 1$.  That is, there exist factors of $s_\theta$ such that 
$$r'_0 \dots r'_{m-1} = s_{n'}\dots s_{n'+m-1},$$ 
and 
$$r''_{0}\dots r''_{m-1} = s_{n''} \dots s_{n''+m-1}.$$

We show that $w_{g-1}$ lexicographically precedes $w_{g}$, for $1 \leq g \leq m$. Then $R^{n'}(0)$ lies in the $g+1$-th interval of intersection, $[q_{g}, q_{g+1})$, i.e., $0 \in R^{-n'}([q_{g}, q_{g+1}))$, and similarly, $0 \in R^{-n''}([q_{g-1}, q_{g}))$. There exists some $h$, $0 \leq h < m$, such that $\{-h\theta\} = q_g = R^{-h}(0)$. This point is the included left endpoint of the $g+1$-th subinterval of intersection and the excluded right endpoint of the  $g$-th subinterval of intersection. We claim that $R^{-n'-h}(0) \in I_{b}$, $R^{-n'-h-1}(0) \in I_{a}$, while $R^{-n''-h}(0) \in I_{a}$, $R^{-n''-h-1}(0) \in I_{b}$.
That is, the $h$-th and $h+1$-th letters of $w_{g-1}$ and $w_{g}$ are $ab$ and $ba$, respectively, unless $h=m$, in which case the last letter of $w_{g-1}$ is $a$ and the last letter of $w_{g}$ is $b$.   

The included left endpoint of the interval $R^{-n'}([q_{g}, q_{g+1}))$ enclosing 0, is, by the calculation above,
 $$R^{-(n'+h)}(0) = R^{-n'}(R^{-h}(0)) = R^{-n'}(\{-h\theta\}) = R^{-n'}(q_{g}).$$ Therefore, since $q_{g} \ne 0$ and $R^{n'}(q_{g}) \geq 1-\theta$, $R^{n'}(q_{g}) \in I_{b}$, i.e., $R^{-(n'+h)}(0) \in I_{b}$, and the $h$-th letter of $w_{g}$ is $b$. $R^{-(n'+h+1)}(0)$ is therefore in the complementary interval, $I_{a}$, and the $h+1$-th letter of $w_g$ is $a$. If $h=m-1$, the first part holds: the last letter is $b$.

Similarly, $$R^{-(n''+h)}(0) = R^{-n''}(R^{-h}(0)) = R^{-n''}(\{-h\theta\}) = R^{-n''}(q_{g}),$$ which is the excluded right endpoint of the interval $R^{-n''}([q_{g-1}, q_{g}))$ enclosing 0, by the calculation above. Therefore, since $0 < R^{-n''}(q_{g}) < 1-\theta$, $R^{-n''}(q_{g}) \in I_{a}$, i.e., $ R^{-(n''+h)}(0) \in I_{a}$, and the $h$-th letter of $w_{g-1}$ is $a$. $R^{-(n''+h+1)}(0)$ is therefore in the complementary interval, $I_{b}$, and the $h+1$-th letter of $w_{g-1}$ is $b$. If $h=m-1$, again, the first part holds: the last letter is $a$.

All other corresponding letters of the two factors must be the same, or they would determine intersections in the complement of $[q_{g-1}, q_{g+1})$.  We therefore have the lexicographic ordering $w_{g-1} < w_{g}$, where $1 \leq g \leq m$. That is, the lexicographic ordering of the $m+1$ factors of length $m$, $w_0 < w_1 < \dots < w_{m}$, matches the increasing order of the points on the unit interval, $0 < q_1 < \dots < q_{m}$. Adjacent factors in the order differ by a single transposition, $ab \rightarrow ba$.
\end{proof}   

\subsection{Varieties and Frequencies for Partitioned Factors}

The lexicographic order for factors induces an order on the partitioned factors. We have the partitioned factors of $\mathcal{L}_{(m,k)}$ factored into $k$ components determined by the composition of $m$, $p_{1}+\dots+p_{k}=m$, and each factor $f$ of length $m$ is considered as a product of concatenated words $f_{i}$ of lengths $p_{i}$. As defined earlier, two factors $w$ and $v$, considered as partitioned factors, are equivalent if and only if their decompositions into constituent words $w_{i}$ and $v_{i}$ of lengths $p_{i}$ have the same heights, for all $i$ from 1 to $k$. The varieties of partitioned factors are the equivalence classes. That there are $k+1$ varieties already follows from Theorem~\ref{card}.  

Let the $k+1$ elements $q'_{0}=0<q'_{1}<\dots<q'_{k}$ be the fractional parts $\{0\theta \}, \{-p_{1}\theta \}, \{-(p_{1}+p_{2})\theta \}, \dots, \{-(p_{1}+ \dots +p_{k})\theta \} = \{-m\theta \}$, arranged in order of magnitude. Since $k \leq m$ and the $p_{j}$ determine a composition of $m$, the $q'_{j}$, ($ 0 \leq j \leq k$) are a subset of the $q_{i}$ ($0 \leq i \leq m$), defined in Theorem \ref{lexorder}.   
 
\begin{cor}\label{partfactors} Consider the partition of the unit circle given by the $k+1$ disjoint connected subintervals $[0, q'_{1}), [q'_{1}, q'_{2}), \dots, [q'_{k-1}, q'_{k}), [q'_{k}, 1)$. These subintervals define the $k+1$ varieties of partitioned factors, in the inherited lexicographic order, and their lengths are the frequencies of the respective partitioned factors.
\end{cor}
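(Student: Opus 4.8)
The plan is to re-run, in the rotation encoding, the argument that proved Theorem~\ref{card}, with Theorem~\ref{lexorder} playing the role that Lemma~\ref{isc} played there. First I would assemble what is already available. By Theorems~\ref{fact} and~\ref{lexorder}, the $m+1$ factors of length $m$, listed in lexicographic order $w_0 < w_1 < \dots < w_m$, correspond bijectively and order-preservingly to the $m+1$ half-open arcs $[0, q_1), [q_1, q_2), \dots, [q_m, 1)$, and by Corollary~\ref{freqfact} the frequency of $w_i$ is the length of its arc. The proof of Theorem~\ref{lexorder} also records what happens between lexicographically consecutive factors: since the letter in position $i$ of a factor toggles exactly as the base point crosses $\{-i\theta\}$ or $\{-(i+1)\theta\}$, crossing the boundary point $q_g = \{-\ell\theta\}$ (and as $g$ runs over $1, \dots, m$, so does $\ell$, in some order) replaces an occurrence of $ab$ by $ba$ in the two adjacent positions indexed $\ell-1$ and $\ell$, degenerating, when $\ell = m$, to a change of the last letter alone, from $a$ below the point to $b$ above it.

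Next I would read off, exactly as in the Christoffel case, when such a transposition changes the height profile of a $P_k$-partitioned factor. Put $s_0 = 0$ and $s_j = p_1 + \dots + p_j$ for $1 \leq j \leq k$, so $s_k = m$ and the component boundaries inside a length-$m$ factor fall between positions $s_j - 1$ and $s_j$ for $1 \leq j \leq k-1$. The transposition at positions $\ell-1, \ell$ stays inside a single component, and so leaves every component height, and thus the height profile, unchanged, precisely when $\ell \notin \{s_1, \dots, s_{k-1}\}$ and $\ell < m$. In every remaining case the profile changes: if $\ell = s_j$ with $1 \leq j \leq k-1$ then component $j$ gains a $b$ and component $j+1$ loses one, and if $\ell = m = s_k$ then component $k$ gains a $b$. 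Since $q'_j = \{-s_j\theta\}$, this says the variety changes on crossing $q_g$ exactly when $q_g$ is one of the $k$ points $q'_1, \dots, q'_k$; the remaining point $q'_0 = 0$ is the seam of the circle, not a lexicographic transition.

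It follows that the variety is constant on each of the $k+1$ arcs $[0, q'_1), [q'_1, q'_2), \dots, [q'_k, 1)$, each of which, because $\{q'_j\} \subseteq \{q_i\}$, is a union of consecutive arcs $[q_{i-1}, q_i)$, and that it genuinely changes across every interior seam $q'_1, \dots, q'_k$. Since there are exactly $k+1$ varieties (Theorem~\ref{card}) and every partitioned factor lies in one of these $k+1$ arcs, the arcs are in order-preserving bijection with the varieties, so listed in increasing order they are the $k+1$ varieties in the inherited lexicographic order. Finally, Corollary~\ref{parfacs} gives $Fr(\lambda) = \sum_{u \in L} Fr(u)$, and summing the lengths of the constituent arcs $[q_{i-1}, q_i)$ telescopes to the length of the enclosing arc $[q'_{j-1}, q'_j)$ (and to $1 - q'_k$ for the last variety), which is exactly the claim.

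The step that needs care is the second one: checking precisely that crossing $q_g = \{-\ell\theta\}$ transposes the letters at positions $\ell-1$ and $\ell$ (with the last-letter degeneracy when $\ell = m$), and that each $\{-s_j\theta\}$ with $j \geq 1$ genuinely appears among the interior boundary points $q_1, \dots, q_m$ and is not the seam $0$, so that it really marks a lexicographic transition. This amounts to re-running the bookkeeping in the proof of Theorem~\ref{lexorder} against the block structure of the partition, and uses no idea beyond those already present in Section~\ref{incw}.
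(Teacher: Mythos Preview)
Your argument is correct and follows essentially the same route as the paper's proof: both combine Theorems~\ref{fact} and~\ref{lexorder} with the $ab\leftrightarrow ba$ boundary-crossing analysis from Theorem~\ref{card}, then invoke Corollary~\ref{parfacs} and telescope the arc lengths; you are simply more explicit where the paper cites the proof of Theorem~\ref{card} directly. One small notational slip: the $q'_j$ are by definition the \emph{ordered} values of the $\{-s_j\theta\}$, so the equality $q'_j=\{-s_j\theta\}$ need not hold indexwise, though your conclusion (that the variety changes exactly at the set $\{q'_1,\dots,q'_k\}=\{\{-s_1\theta\},\dots,\{-s_k\theta\}\}$) is unaffected.
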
 
\begin{proof} 

From the proof of Theorem~\ref{card}, we know that a variety $\lambda_{j}$ of partitioned factors in $\mathcal{L}_{(m,k)}$ gathers together the underlying factors $L$ that are adjacent in the lexicographic order. From Theorems~\ref{fact} and \ref{lexorder} we know that the underlying factors $u_{i}$ in lexicographic order correspond to the subintervals $[q_{i}, q_{i+1})$, $0 \leq i \leq m$, with the last of the $m+1$ factors corresponding to the subinterval $[q_{m}, 1)$. We also know, moreover, that the frequencies of the factors are given by the lengths of the subintervals. Since the points $q'_{j}$ are a subset of the $q_{i}$, for a given variety $\lambda_{j}$, we have $q'_{j} = q_{i}$, for some $i$, and $q'_{j+1} = q_{i+r}$, for some $1 \leq r \leq m-i$ (or in case $j=k$, $q'_{k+1} = 1$). Then by Corollary \ref{parfacs}, we have $Fr(\lambda_{j}) = \sum_{u \in L} Fr(u)$; i.e., the length of the subinterval $[q'_{j}, q'_{j+1})$. 
\end{proof}    

While frequencies of factors of length $m$ take on at most three distinct values (\cite{Berthe}), corresponding to the trivial composition of $m$ into $m$ parts, Corollary \ref{partfactors} shows that the number of values the frequencies of partitioned factors can take on is limited only by the complexity of the composition of $m$ into $k$ parts. The frequencies can be any of the possible sums of lengths of adjacent subintervals $[q_{i}, q_{i+1})$ of Theorem \ref{lexorder}, depending on the composition.

The theorems and corollaries of the second part of the paper are exemplified in Figure~\ref{fig: Freq}, in the context of the Sturmian word from Example~\ref{l23}.

\begin{figure}
\center  
\includegraphics [width=12.6 cm] {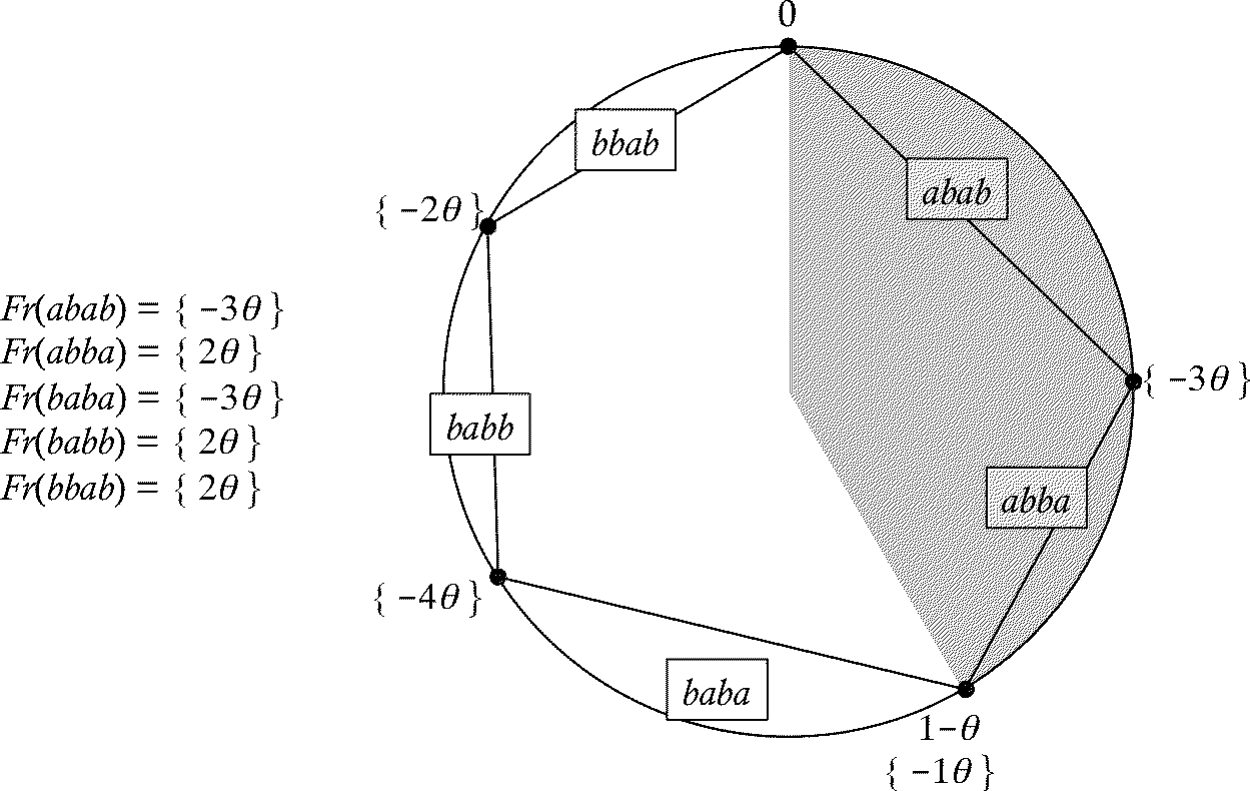}
\includegraphics [width=12.6 cm]{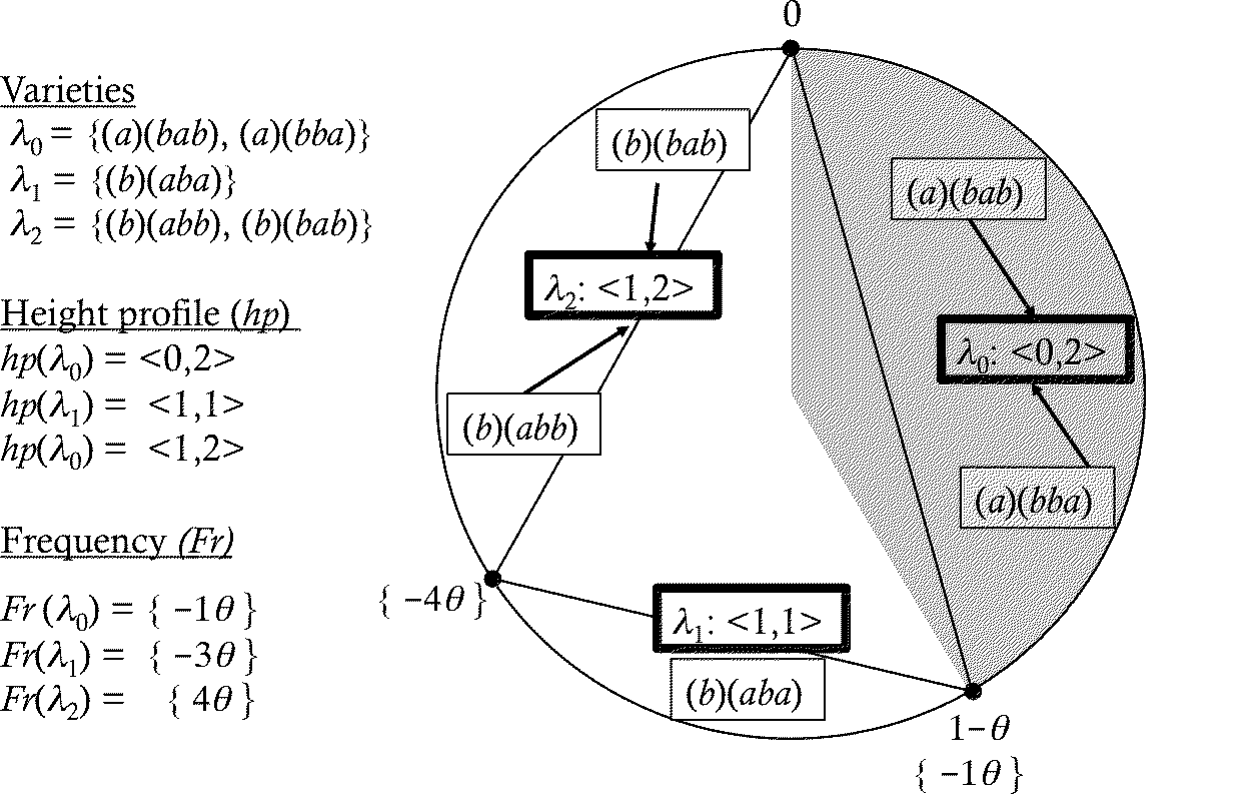}   
\caption{With $\theta = \log_{2}(3/2)$, frequencies of factors and partitioned factors of the Sturmian word $s_{\theta} = abababbababbabababbababbabababbababb\dots$. (See Example \ref{l23}.) The shaded areas in both circles represent the half-open interval, $[0, 1-\theta)$, where $\{n\theta\} \longrightarrow a$. The non-shaded areas repesent the half-open interval $[1-\theta, 1)$, $\{n\theta\}\longrightarrow b$. Upper diagram: the frequencies of factors $\mathcal{L}_{4}$. Lower diagram: the frequencies of partitioned factors $\mathcal{L}_{(4,2)}$, under the composition (1,3).}
\label{fig: Freq}
\end{figure}

 \section{Coda}
The motivation for this study is two-fold: the first is in the recognition of the fact that the diatonic scale may be represented by a conjugate of the Christoffel word $aaabaab$, where $a$ represents a whole tone or major second and $b$ represents a semitone or minor second; the second stems from two papers in mathematical music theory, \cite{CloughMyerson85, CloughMyerson86}. (See Figure \ref{fig: Partitioned_Christoffel} and Example \ref{seven}).
Clough and Myerson show that the ordinary diatonic scale in equal temperament exhibits ``Myhill's Property,'' which musicians will recognize in the fact that  diatonic intervals---seconds, thirds, fourths, fifths, etc.---come in two varieties. Musicians refer to these varieties as intervallic ``qualities,'' so, for example, we have \emph{major and minor} seconds, \emph{major and minor} thirds, \emph{perfect and augmented} fourths, \emph{perfect and diminished} fifths, and so on. There are, then, exactly two specific qualities for each span for intervals up to but not including the octave. This property, they showed, holds in turn for collections of ``chords,'' i.e., larger collections of intervals. Chords with $n$ notes come in $n$ varieties. For example,  three-note diatonic ``triads'' come in three varieties, known as major, minor, and diminished: all three exist in any given diatonic scale. Finally, using an ordering of the  scale known as the ``circle of fiths,'' they showed how to calculate the multiplicities of each type.  Proposition \ref{mp} showed that Myhill's Property characterizes Christoffel words. In Theorem \ref{card} and Corollary \ref{mult}, we presented Clough and Myerson's results within the context of Christoffel  words, while Section \ref{pfsturm} extended these results to Sturmian words. 

\bibliographystyle{elsarticle-harv}
\bibliography{parrefs}
\end{document}